\documentclass[lettersize,journal]{IEEEtran}

\usepackage{amsmath,amsfonts,amssymb,mathtools,bm}
\usepackage{amsthm}
\usepackage{algorithm}
\usepackage{algorithmic}

\usepackage[caption=false,font=footnotesize]{subfig}
\usepackage{array}
\usepackage{booktabs}
\usepackage{cite}
\usepackage{verbatim}
\usepackage{graphicx}
\usepackage{chngcntr}
\usepackage{textcomp}
\usepackage{stfloats}
\usepackage{url}
\usepackage{balance}
\usepackage[dvipsnames]{xcolor}
\usepackage{tikz}
\usepackage{pgfplots}
\pgfplotsset{compat=1.18}
\usetikzlibrary{arrows.meta,positioning,shapes.geometric,fit,calc,decorations.pathreplacing,patterns}

\usepgfplotslibrary{groupplots}

\newif\ifextendedappendices
\extendedappendicestrue
\newif\ifshowrevisionblue
\showrevisionbluetrue

\definecolor{MethodBase}{HTML}{1F77B4}
\definecolor{MethodCentral}{HTML}{D62728}
\definecolor{MethodKKT}{HTML}{2CA02C}
\definecolor{MethodFY}{HTML}{9467BD}
\definecolor{MethodFitz}{HTML}{FF7F0E}
\pgfplotsset{
  compat=1.18,
  every axis/.append style={
    tick label style={font=\scriptsize},
    label style={font=\scriptsize},
    title style={font=\footnotesize},
    legend style={font=\scriptsize},
    grid=major,
    grid style={black!16,densely dotted},
    line width=0.72pt
  },
  baseplot/.style={MethodBase,mark=*,mark size=1.45pt},
  centralplot/.style={MethodCentral,mark=square*,mark size=1.35pt},
  kktplot/.style={MethodKKT,mark=triangle*,mark size=1.55pt},
  fyplot/.style={MethodFY,mark=diamond*,mark size=1.45pt},
  fitzplot/.style={MethodFitz,mark=x,mark size=1.65pt}
}

\definecolor{baseColor}{rgb}{0.121569,0.466667,0.705882}
\definecolor{centColor}{rgb}{0.839216,0.152941,0.156863}
\definecolor{kktColor}{rgb}{0.172549,0.627451,0.172549}
\definecolor{quadFYColor}{rgb}{0.580392,0.403922,0.741177}
\definecolor{gridGray}{gray}{0.690196}
\definecolor{legendBorder}{gray}{0.800000}

\tikzset{
  gridLine/.style={draw=gridGray,line width=0.5bp,dash pattern=on 0.5bp off 0.825bp},
  axisLine/.style={draw=black,line width=0.8bp},
  seriesLine/.style={line width=2bp,line cap=rect,line join=round},
  legendBox/.style={draw=legendBorder,fill=white,line width=0.8bp,rounded corners=1.39bp},
  tickText/.style={font=\fontsize{8bp}{9.6bp}\selectfont,inner sep=0bp,outer sep=0bp,text=black},
  axisLabel/.style={font=\fontsize{11bp}{13.2bp}\selectfont,inner sep=0bp,outer sep=0bp,text=black},
  legendText/.style={font=\fontsize{7bp}{8.4bp}\selectfont,inner sep=0bp,outer sep=0bp,text=black},
}

\newcommand{\markercircle}[3]{\path[draw=#1,fill=#1,line width=1bp] (#2bp,#3bp) circle[radius=1.75bp]}
\newcommand{\markersquare}[3]{\path[draw=#1,fill=#1,line width=1bp] ($ (#2bp,#3bp)+(-1.75bp,-1.75bp) $) rectangle ($ (#2bp,#3bp)+(1.75bp,1.75bp) $)}
\newcommand{\markertriangle}[3]{\path[draw=#1,fill=#1,line width=1bp] (#2bp,{#3bp+1.75bp}) -- ({#2bp-1.75bp},{#3bp-1.75bp}) -- ({#2bp+1.75bp},{#3bp-1.75bp}) -- cycle}
\newcommand{\markerdiamond}[3]{\path[draw=#1,fill=#1,line width=1bp] (#2bp,{#3bp-2.474874bp}) -- ({#2bp+2.474874bp},#3bp) -- (#2bp,{#3bp+2.474874bp}) -- ({#2bp-2.474874bp},#3bp) -- cycle}
\newcommand{\markercross}[3]{\draw[#1,line width=1bp,line cap=round] ({#2bp-1.75bp},{#3bp-1.75bp}) -- ({#2bp+1.75bp},{#3bp+1.75bp}) ({#2bp-1.75bp},{#3bp+1.75bp}) -- ({#2bp+1.75bp},{#3bp-1.75bp})}

\newcommand{\FigureCaseC}{%
\begin{tikzpicture}[x=1bp,y=1bp]
  \path[use as bounding box] (0bp,0bp) rectangle (510.56bp,229.814bp);
  \fill[white] (0bp,0bp) rectangle (510.56bp,229.814bp);
  \fill[white] (46.669bp,36.591bp) rectangle (503.360bp,206.740bp);

  \draw[gridLine] (67.427bp,36.591bp) -- (67.427bp,206.740bp);
  \draw[gridLine] (142.914bp,36.591bp) -- (142.914bp,206.740bp);
  \draw[gridLine] (218.400bp,36.591bp) -- (218.400bp,206.740bp);
  \draw[gridLine] (293.886bp,36.591bp) -- (293.886bp,206.740bp);
  \draw[gridLine] (369.372bp,36.591bp) -- (369.372bp,206.740bp);
  \draw[gridLine] (444.858bp,36.591bp) -- (444.858bp,206.740bp);
  \draw[gridLine] (46.669bp,44.325bp) -- (503.360bp,44.325bp);
  \draw[gridLine] (46.669bp,64.479bp) -- (503.360bp,64.479bp);
  \draw[gridLine] (46.669bp,84.633bp) -- (503.360bp,84.633bp);
  \draw[gridLine] (46.669bp,104.787bp) -- (503.360bp,104.787bp);
  \draw[gridLine] (46.669bp,124.941bp) -- (503.360bp,124.941bp);
  \draw[gridLine] (46.669bp,145.095bp) -- (503.360bp,145.095bp);
  \draw[gridLine] (46.669bp,165.249bp) -- (503.360bp,165.249bp);
  \draw[gridLine] (46.669bp,185.403bp) -- (503.360bp,185.403bp);
  \draw[gridLine] (46.669bp,205.557bp) -- (503.360bp,205.557bp);

  \begin{scope}
    \clip (46.669bp,36.591bp) rectangle (503.360bp,206.740bp);
    \draw[seriesLine,baseColor] (67.427bp,114.601bp) -- (105.171bp,199.005bp) -- (142.914bp,109.340bp) -- (180.657bp,119.990bp) -- (218.400bp,118.253bp) -- (256.143bp,153.352bp) -- (293.886bp,133.477bp) -- (331.629bp,44.325bp) -- (369.372bp,44.325bp) -- (407.115bp,44.325bp) -- (444.858bp,44.325bp) -- (482.602bp,44.325bp);
    \markercircle{baseColor}{67.427}{114.601};
    \markercircle{baseColor}{105.171}{199.005};
    \markercircle{baseColor}{142.914}{109.340};
    \markercircle{baseColor}{180.657}{119.990};
    \markercircle{baseColor}{218.400}{118.253};
    \markercircle{baseColor}{256.143}{153.352};
    \markercircle{baseColor}{293.886}{133.477};
    \markercircle{baseColor}{331.629}{44.325};
    \markercircle{baseColor}{369.372}{44.325};
    \markercircle{baseColor}{407.115}{44.325};
    \markercircle{baseColor}{444.858}{44.325};
    \markercircle{baseColor}{482.602}{44.325};
    \draw[seriesLine,centColor] (67.427bp,46.225bp) -- (105.171bp,46.225bp) -- (142.914bp,46.225bp) -- (180.657bp,46.225bp) -- (218.400bp,46.225bp) -- (256.143bp,46.225bp) -- (293.886bp,46.225bp) -- (331.629bp,46.225bp) -- (369.372bp,44.325bp) -- (407.115bp,44.325bp) -- (444.858bp,44.325bp) -- (482.602bp,44.325bp);
    \markersquare{centColor}{67.427}{46.225};
    \markersquare{centColor}{105.171}{46.225};
    \markersquare{centColor}{142.914}{46.225};
    \markersquare{centColor}{180.657}{46.225};
    \markersquare{centColor}{218.400}{46.225};
    \markersquare{centColor}{256.143}{46.225};
    \markersquare{centColor}{293.886}{46.225};
    \markersquare{centColor}{331.629}{46.225};
    \markersquare{centColor}{369.372}{44.325};
    \markersquare{centColor}{407.115}{44.325};
    \markersquare{centColor}{444.858}{44.325};
    \markersquare{centColor}{482.602}{44.325};
    \draw[seriesLine,kktColor] (67.427bp,48.244bp) -- (105.171bp,54.331bp) -- (142.914bp,54.356bp) -- (180.657bp,48.184bp) -- (218.400bp,56.933bp) -- (256.143bp,44.325bp) -- (293.886bp,44.325bp) -- (331.629bp,51.586bp) -- (369.372bp,44.325bp) -- (407.115bp,44.325bp) -- (444.858bp,44.325bp) -- (482.602bp,44.325bp);
    \markertriangle{kktColor}{67.427}{48.244};
    \markertriangle{kktColor}{105.171}{54.331};
    \markertriangle{kktColor}{142.914}{54.356};
    \markertriangle{kktColor}{180.657}{48.184};
    \markertriangle{kktColor}{218.400}{56.933};
    \markertriangle{kktColor}{256.143}{44.325};
    \markertriangle{kktColor}{293.886}{44.325};
    \markertriangle{kktColor}{331.629}{51.586};
    \markertriangle{kktColor}{369.372}{44.325};
    \markertriangle{kktColor}{407.115}{44.325};
    \markertriangle{kktColor}{444.858}{44.325};
    \markertriangle{kktColor}{482.602}{44.325};
    \draw[seriesLine,quadFYColor] (67.427bp,48.887bp) -- (105.171bp,54.328bp) -- (142.914bp,49.710bp) -- (180.657bp,44.325bp) -- (218.400bp,63.213bp) -- (256.143bp,51.571bp) -- (293.886bp,52.532bp) -- (331.629bp,51.584bp) -- (369.372bp,44.325bp) -- (407.115bp,44.325bp) -- (444.858bp,44.325bp) -- (482.602bp,44.325bp);
    \markerdiamond{quadFYColor}{67.427}{48.887};
    \markerdiamond{quadFYColor}{105.171}{54.328};
    \markerdiamond{quadFYColor}{142.914}{49.710};
    \markerdiamond{quadFYColor}{180.657}{44.325};
    \markerdiamond{quadFYColor}{218.400}{63.213};
    \markerdiamond{quadFYColor}{256.143}{51.571};
    \markerdiamond{quadFYColor}{293.886}{52.532};
    \markerdiamond{quadFYColor}{331.629}{51.584};
    \markerdiamond{quadFYColor}{369.372}{44.325};
    \markerdiamond{quadFYColor}{407.115}{44.325};
    \markerdiamond{quadFYColor}{444.858}{44.325};
    \markerdiamond{quadFYColor}{482.602}{44.325};
  \end{scope}

  \draw[axisLine] (46.669bp,36.591bp) rectangle (503.360bp,206.740bp);
  \draw[axisLine] (67.427bp,36.591bp) -- (67.427bp,33.101bp);
  \node[tickText,anchor=base west] at (64.881bp,23.513bp) {0};
  \draw[axisLine] (142.914bp,36.591bp) -- (142.914bp,33.101bp);
  \node[tickText,anchor=base west] at (140.367bp,23.513bp) {2};
  \draw[axisLine] (218.400bp,36.591bp) -- (218.400bp,33.101bp);
  \node[tickText,anchor=base west] at (215.853bp,23.513bp) {4};
  \draw[axisLine] (293.886bp,36.591bp) -- (293.886bp,33.101bp);
  \node[tickText,anchor=base west] at (291.339bp,23.513bp) {6};
  \draw[axisLine] (369.372bp,36.591bp) -- (369.372bp,33.101bp);
  \node[tickText,anchor=base west] at (366.825bp,23.513bp) {8};
  \draw[axisLine] (444.858bp,36.591bp) -- (444.858bp,33.101bp);
  \node[tickText,anchor=base west] at (439.765bp,23.513bp) {10};
  \draw[axisLine] (46.669bp,44.325bp) -- (43.179bp,44.325bp);
  \node[tickText,anchor=base west] at (21.841bp,41.286bp) {0.00};
  \draw[axisLine] (46.669bp,64.479bp) -- (43.179bp,64.479bp);
  \node[tickText,anchor=base west] at (21.841bp,61.440bp) {0.25};
  \draw[axisLine] (46.669bp,84.633bp) -- (43.179bp,84.633bp);
  \node[tickText,anchor=base west] at (21.841bp,81.594bp) {0.50};
  \draw[axisLine] (46.669bp,104.787bp) -- (43.179bp,104.787bp);
  \node[tickText,anchor=base west] at (21.841bp,101.748bp) {0.75};
  \draw[axisLine] (46.669bp,124.941bp) -- (43.179bp,124.941bp);
  \node[tickText,anchor=base west] at (21.841bp,121.902bp) {1.00};
  \draw[axisLine] (46.669bp,145.095bp) -- (43.179bp,145.095bp);
  \node[tickText,anchor=base west] at (21.841bp,142.056bp) {1.25};
  \draw[axisLine] (46.669bp,165.249bp) -- (43.179bp,165.249bp);
  \node[tickText,anchor=base west] at (21.841bp,162.210bp) {1.50};
  \draw[axisLine] (46.669bp,185.403bp) -- (43.179bp,185.403bp);
  \node[tickText,anchor=base west] at (21.841bp,182.364bp) {1.75};
  \draw[axisLine] (46.669bp,205.557bp) -- (43.179bp,205.557bp);
  \node[tickText,anchor=base west] at (21.841bp,202.518bp) {2.00};
  \node[axisLabel,anchor=base west] at (225.249bp,9.482bp) {Settlement period};
  \node[axisLabel,rotate=90,anchor=base west] at (15.559bp,74.462bp) {Congestion (MW)};

  \draw[legendBox] (376.383bp,170.342bp) rectangle (499.863bp,203.232bp);
  \draw[seriesLine,baseColor] (379.173bp,197.577bp) -- (393.173bp,197.577bp);
  \markercircle{baseColor}{386.173}{197.577};
  \node[legendText,anchor=base west] at (398.773bp,195.127bp) {Base};
  \draw[seriesLine,centColor] (379.173bp,187.311bp) -- (393.173bp,187.311bp);
  \markersquare{centColor}{386.173}{187.311};
  \node[legendText,anchor=base west] at (398.773bp,184.861bp) {Cent.};
  \draw[seriesLine,kktColor] (379.173bp,177.045bp) -- (393.173bp,177.045bp);
  \markertriangle{kktColor}{386.173}{177.045};
  \node[legendText,anchor=base west] at (398.773bp,174.595bp) {KKT/MILP};
  \draw[seriesLine,quadFYColor] (444.335bp,197.577bp) -- (458.335bp,197.577bp);
  \markerdiamond{quadFYColor}{451.335}{197.577};
  \node[legendText,anchor=base west] at (463.935bp,195.127bp) {Quad-FY};
\end{tikzpicture}%
}

\newcommand{\FigureCaseRone}{%
\begin{tikzpicture}[x=1bp,y=1bp]
  \path[use as bounding box] (0bp,0bp) rectangle (510.56bp,229.814bp);
  \fill[white] (0bp,0bp) rectangle (510.56bp,229.814bp);
  \fill[white] (41.575bp,36.591bp) rectangle (503.345bp,206.740bp);

  \draw[gridLine] (62.565bp,36.591bp) -- (62.565bp,206.740bp);
  \draw[gridLine] (138.890bp,36.591bp) -- (138.890bp,206.740bp);
  \draw[gridLine] (215.216bp,36.591bp) -- (215.216bp,206.740bp);
  \draw[gridLine] (291.541bp,36.591bp) -- (291.541bp,206.740bp);
  \draw[gridLine] (367.867bp,36.591bp) -- (367.867bp,206.740bp);
  \draw[gridLine] (444.193bp,36.591bp) -- (444.193bp,206.740bp);
  \draw[gridLine] (41.575bp,44.325bp) -- (503.345bp,44.325bp);
  \draw[gridLine] (41.575bp,76.201bp) -- (503.345bp,76.201bp);
  \draw[gridLine] (41.575bp,108.078bp) -- (503.345bp,108.078bp);
  \draw[gridLine] (41.575bp,139.955bp) -- (503.345bp,139.955bp);
  \draw[gridLine] (41.575bp,171.832bp) -- (503.345bp,171.832bp);
  \draw[gridLine] (41.575bp,203.708bp) -- (503.345bp,203.708bp);

  \begin{scope}
    \clip (41.575bp,36.591bp) rectangle (503.345bp,206.740bp);
    \draw[seriesLine,baseColor] (62.565bp,48.616bp) -- (100.727bp,95.218bp) -- (138.890bp,44.325bp) -- (177.053bp,44.325bp) -- (215.216bp,44.325bp) -- (253.379bp,44.325bp) -- (291.541bp,62.840bp) -- (329.704bp,44.325bp) -- (367.867bp,44.325bp) -- (406.030bp,199.005bp) -- (444.193bp,44.325bp) -- (482.355bp,44.325bp);
    \markercircle{baseColor}{62.565}{48.616};
    \markercircle{baseColor}{100.727}{95.218};
    \markercircle{baseColor}{138.890}{44.325};
    \markercircle{baseColor}{177.053}{44.325};
    \markercircle{baseColor}{215.216}{44.325};
    \markercircle{baseColor}{253.379}{44.325};
    \markercircle{baseColor}{291.541}{62.840};
    \markercircle{baseColor}{329.704}{44.325};
    \markercircle{baseColor}{367.867}{44.325};
    \markercircle{baseColor}{406.030}{199.005};
    \markercircle{baseColor}{444.193}{44.325};
    \markercircle{baseColor}{482.355}{44.325};
    \draw[seriesLine,quadFYColor] (62.565bp,44.325bp) -- (100.727bp,44.325bp) -- (138.890bp,44.325bp) -- (177.053bp,44.325bp) -- (215.216bp,44.325bp) -- (253.379bp,44.325bp) -- (291.541bp,44.325bp) -- (329.704bp,44.325bp) -- (367.867bp,44.325bp) -- (406.030bp,182.221bp) -- (444.193bp,44.325bp) -- (482.355bp,44.325bp);
    \markerdiamond{quadFYColor}{62.565}{44.325};
    \markerdiamond{quadFYColor}{100.727}{44.325};
    \markerdiamond{quadFYColor}{138.890}{44.325};
    \markerdiamond{quadFYColor}{177.053}{44.325};
    \markerdiamond{quadFYColor}{215.216}{44.325};
    \markerdiamond{quadFYColor}{253.379}{44.325};
    \markerdiamond{quadFYColor}{291.541}{44.325};
    \markerdiamond{quadFYColor}{329.704}{44.325};
    \markerdiamond{quadFYColor}{367.867}{44.325};
    \markerdiamond{quadFYColor}{406.030}{182.221};
    \markerdiamond{quadFYColor}{444.193}{44.325};
    \markerdiamond{quadFYColor}{482.355}{44.325};
    \draw[seriesLine,kktColor] (62.565bp,44.325bp) -- (100.727bp,44.325bp) -- (138.890bp,44.325bp) -- (177.053bp,44.325bp) -- (215.216bp,44.325bp) -- (253.379bp,44.325bp) -- (291.541bp,44.325bp) -- (329.704bp,44.325bp) -- (367.867bp,44.325bp) -- (406.030bp,182.231bp) -- (444.193bp,44.325bp) -- (482.355bp,44.325bp);
    \markertriangle{kktColor}{62.565}{44.325};
    \markertriangle{kktColor}{100.727}{44.325};
    \markertriangle{kktColor}{138.890}{44.325};
    \markertriangle{kktColor}{177.053}{44.325};
    \markertriangle{kktColor}{215.216}{44.325};
    \markertriangle{kktColor}{253.379}{44.325};
    \markertriangle{kktColor}{291.541}{44.325};
    \markertriangle{kktColor}{329.704}{44.325};
    \markertriangle{kktColor}{367.867}{44.325};
    \markertriangle{kktColor}{406.030}{182.231};
    \markertriangle{kktColor}{444.193}{44.325};
    \markertriangle{kktColor}{482.355}{44.325};
    \draw[seriesLine,centColor] (62.565bp,44.325bp) -- (100.727bp,44.325bp) -- (138.890bp,44.325bp) -- (177.053bp,44.325bp) -- (215.216bp,44.325bp) -- (253.379bp,44.325bp) -- (291.541bp,44.325bp) -- (329.704bp,44.325bp) -- (367.867bp,44.325bp) -- (406.030bp,182.231bp) -- (444.193bp,44.325bp) -- (482.355bp,44.325bp);
    \markersquare{centColor}{62.565}{44.325};
    \markersquare{centColor}{100.727}{44.325};
    \markersquare{centColor}{138.890}{44.325};
    \markersquare{centColor}{177.053}{44.325};
    \markersquare{centColor}{215.216}{44.325};
    \markersquare{centColor}{253.379}{44.325};
    \markersquare{centColor}{291.541}{44.325};
    \markersquare{centColor}{329.704}{44.325};
    \markersquare{centColor}{367.867}{44.325};
    \markersquare{centColor}{406.030}{182.231};
    \markersquare{centColor}{444.193}{44.325};
    \markersquare{centColor}{482.355}{44.325};
  \end{scope}

  \draw[axisLine] (41.575bp,36.591bp) rectangle (503.345bp,206.740bp);
  \draw[axisLine] (62.565bp,36.591bp) -- (62.565bp,33.101bp);
  \node[tickText,anchor=base west] at (60.018bp,23.513bp) {0};
  \draw[axisLine] (138.890bp,36.591bp) -- (138.890bp,33.101bp);
  \node[tickText,anchor=base west] at (136.343bp,23.513bp) {2};
  \draw[axisLine] (215.216bp,36.591bp) -- (215.216bp,33.101bp);
  \node[tickText,anchor=base west] at (212.669bp,23.513bp) {4};
  \draw[axisLine] (291.541bp,36.591bp) -- (291.541bp,33.101bp);
  \node[tickText,anchor=base west] at (288.995bp,23.513bp) {6};
  \draw[axisLine] (367.867bp,36.591bp) -- (367.867bp,33.101bp);
  \node[tickText,anchor=base west] at (365.320bp,23.513bp) {8};
  \draw[axisLine] (444.193bp,36.591bp) -- (444.193bp,33.101bp);
  \node[tickText,anchor=base west] at (439.099bp,23.513bp) {10};
  \draw[axisLine] (41.575bp,44.325bp) -- (38.085bp,44.325bp);
  \node[tickText,anchor=base west] at (21.841bp,41.286bp) {0.0};
  \draw[axisLine] (41.575bp,76.201bp) -- (38.085bp,76.201bp);
  \node[tickText,anchor=base west] at (21.841bp,73.162bp) {0.2};
  \draw[axisLine] (41.575bp,108.078bp) -- (38.085bp,108.078bp);
  \node[tickText,anchor=base west] at (21.841bp,105.039bp) {0.4};
  \draw[axisLine] (41.575bp,139.955bp) -- (38.085bp,139.955bp);
  \node[tickText,anchor=base west] at (21.841bp,136.916bp) {0.6};
  \draw[axisLine] (41.575bp,171.832bp) -- (38.085bp,171.832bp);
  \node[tickText,anchor=base west] at (21.841bp,168.793bp) {0.8};
  \draw[axisLine] (41.575bp,203.708bp) -- (38.085bp,203.708bp);
  \node[tickText,anchor=base west] at (21.841bp,200.669bp) {1.0};
  \node[axisLabel,anchor=base west] at (222.694bp,9.482bp) {Settlement period};
  \node[axisLabel,rotate=90,anchor=base west] at (15.559bp,74.462bp) {Congestion (MW)};

  \draw[legendBox] (45.085bp,170.342bp) rectangle (168.565bp,203.232bp);
  \draw[seriesLine,baseColor] (47.875bp,197.577bp) -- (61.875bp,197.577bp);
  \markercircle{baseColor}{54.875}{197.577};
  \node[legendText,anchor=base west] at (67.475bp,195.127bp) {Base};
  \draw[seriesLine,quadFYColor] (47.875bp,187.311bp) -- (61.875bp,187.311bp);
  \markerdiamond{quadFYColor}{54.875}{187.311};
  \node[legendText,anchor=base west] at (67.475bp,184.861bp) {Quad-FY};
  \draw[seriesLine,kktColor] (114.600bp,197.577bp) -- (128.600bp,197.577bp);
  \markertriangle{kktColor}{121.600}{197.577};
  \node[legendText,anchor=base west] at (134.200bp,195.127bp) {KKT/MILP};
  \draw[seriesLine,centColor] (114.600bp,187.311bp) -- (128.600bp,187.311bp);
  \markersquare{centColor}{121.600}{187.311};
  \node[legendText,anchor=base west] at (134.200bp,184.861bp) {Cent.};
\end{tikzpicture}%
}

\newcommand{\FigureCaseA}{%
\begin{tikzpicture}[x=1bp,y=1bp]
  \path[use as bounding box] (0bp,0bp) rectangle (510.56bp,229.814bp);
  \fill[white] (0bp,0bp) rectangle (510.56bp,229.814bp);
  \fill[white] (41.575bp,36.591bp) rectangle (503.345bp,206.740bp);

  \draw[gridLine] (62.565bp,36.591bp) -- (62.565bp,206.740bp);
  \draw[gridLine] (138.890bp,36.591bp) -- (138.890bp,206.740bp);
  \draw[gridLine] (215.216bp,36.591bp) -- (215.216bp,206.740bp);
  \draw[gridLine] (291.541bp,36.591bp) -- (291.541bp,206.740bp);
  \draw[gridLine] (367.867bp,36.591bp) -- (367.867bp,206.740bp);
  \draw[gridLine] (444.193bp,36.591bp) -- (444.193bp,206.740bp);
  \draw[gridLine] (41.575bp,44.325bp) -- (503.345bp,44.325bp);
  \draw[gridLine] (41.575bp,63.926bp) -- (503.345bp,63.926bp);
  \draw[gridLine] (41.575bp,83.528bp) -- (503.345bp,83.528bp);
  \draw[gridLine] (41.575bp,103.129bp) -- (503.345bp,103.129bp);
  \draw[gridLine] (41.575bp,122.730bp) -- (503.345bp,122.730bp);
  \draw[gridLine] (41.575bp,142.332bp) -- (503.345bp,142.332bp);
  \draw[gridLine] (41.575bp,161.933bp) -- (503.345bp,161.933bp);
  \draw[gridLine] (41.575bp,181.535bp) -- (503.345bp,181.535bp);
  \draw[gridLine] (41.575bp,201.136bp) -- (503.345bp,201.136bp);

  \begin{scope}
    \clip (41.575bp,36.591bp) rectangle (503.345bp,206.740bp);
    \draw[seriesLine,baseColor] (62.565bp,134.824bp) -- (100.727bp,199.005bp) -- (138.890bp,101.043bp) -- (177.053bp,114.408bp) -- (215.216bp,110.558bp) -- (253.379bp,159.346bp) -- (291.541bp,157.941bp) -- (329.704bp,62.765bp) -- (367.867bp,49.280bp) -- (406.030bp,54.019bp) -- (444.193bp,44.325bp) -- (482.355bp,44.325bp);
    \markercircle{baseColor}{62.565}{134.824};
    \markercircle{baseColor}{100.727}{199.005};
    \markercircle{baseColor}{138.890}{101.043};
    \markercircle{baseColor}{177.053}{114.408};
    \markercircle{baseColor}{215.216}{110.558};
    \markercircle{baseColor}{253.379}{159.346};
    \markercircle{baseColor}{291.541}{157.941};
    \markercircle{baseColor}{329.704}{62.765};
    \markercircle{baseColor}{367.867}{49.280};
    \markercircle{baseColor}{406.030}{54.019};
    \markercircle{baseColor}{444.193}{44.325};
    \markercircle{baseColor}{482.355}{44.325};
    \draw[seriesLine,centColor] (62.565bp,44.325bp) -- (100.727bp,44.325bp) -- (138.890bp,44.325bp) -- (177.053bp,44.325bp) -- (215.216bp,44.325bp) -- (253.379bp,44.325bp) -- (291.541bp,44.325bp) -- (329.704bp,44.325bp) -- (367.867bp,44.325bp) -- (406.030bp,44.325bp) -- (444.193bp,44.325bp) -- (482.355bp,44.325bp);
    \markersquare{centColor}{62.565}{44.325};
    \markersquare{centColor}{100.727}{44.325};
    \markersquare{centColor}{138.890}{44.325};
    \markersquare{centColor}{177.053}{44.325};
    \markersquare{centColor}{215.216}{44.325};
    \markersquare{centColor}{253.379}{44.325};
    \markersquare{centColor}{291.541}{44.325};
    \markersquare{centColor}{329.704}{44.325};
    \markersquare{centColor}{367.867}{44.325};
    \markersquare{centColor}{406.030}{44.325};
    \markersquare{centColor}{444.193}{44.325};
    \markersquare{centColor}{482.355}{44.325};
    \draw[seriesLine,kktColor] (62.565bp,46.981bp) -- (100.727bp,44.325bp) -- (138.890bp,45.193bp) -- (177.053bp,44.325bp) -- (215.216bp,44.325bp) -- (253.379bp,44.325bp) -- (291.541bp,47.292bp) -- (329.704bp,44.678bp) -- (367.867bp,44.325bp) -- (406.030bp,44.325bp) -- (444.193bp,45.793bp) -- (482.355bp,44.325bp);
    \markertriangle{kktColor}{62.565}{46.981};
    \markertriangle{kktColor}{100.727}{44.325};
    \markertriangle{kktColor}{138.890}{45.193};
    \markertriangle{kktColor}{177.053}{44.325};
    \markertriangle{kktColor}{215.216}{44.325};
    \markertriangle{kktColor}{253.379}{44.325};
    \markertriangle{kktColor}{291.541}{47.292};
    \markertriangle{kktColor}{329.704}{44.678};
    \markertriangle{kktColor}{367.867}{44.325};
    \markertriangle{kktColor}{406.030}{44.325};
    \markertriangle{kktColor}{444.193}{45.793};
    \markertriangle{kktColor}{482.355}{44.325};
    \draw[seriesLine,quadFYColor] (62.565bp,44.325bp) -- (100.727bp,44.325bp) -- (138.890bp,45.193bp) -- (177.053bp,51.620bp) -- (215.216bp,44.325bp) -- (253.379bp,44.325bp) -- (291.541bp,46.034bp) -- (329.704bp,57.559bp) -- (367.867bp,44.325bp) -- (406.030bp,44.325bp) -- (444.193bp,44.325bp) -- (482.355bp,44.325bp);
    \markerdiamond{quadFYColor}{62.565}{44.325};
    \markerdiamond{quadFYColor}{100.727}{44.325};
    \markerdiamond{quadFYColor}{138.890}{45.193};
    \markerdiamond{quadFYColor}{177.053}{51.620};
    \markerdiamond{quadFYColor}{215.216}{44.325};
    \markerdiamond{quadFYColor}{253.379}{44.325};
    \markerdiamond{quadFYColor}{291.541}{46.034};
    \markerdiamond{quadFYColor}{329.704}{57.559};
    \markerdiamond{quadFYColor}{367.867}{44.325};
    \markerdiamond{quadFYColor}{406.030}{44.325};
    \markerdiamond{quadFYColor}{444.193}{44.325};
    \markerdiamond{quadFYColor}{482.355}{44.325};
  \end{scope}

  \draw[axisLine] (41.575bp,36.591bp) rectangle (503.345bp,206.740bp);
  \draw[axisLine] (62.565bp,36.591bp) -- (62.565bp,33.101bp);
  \node[tickText,anchor=base west] at (60.018bp,23.513bp) {0};
  \draw[axisLine] (138.890bp,36.591bp) -- (138.890bp,33.101bp);
  \node[tickText,anchor=base west] at (136.343bp,23.513bp) {2};
  \draw[axisLine] (215.216bp,36.591bp) -- (215.216bp,33.101bp);
  \node[tickText,anchor=base west] at (212.669bp,23.513bp) {4};
  \draw[axisLine] (291.541bp,36.591bp) -- (291.541bp,33.101bp);
  \node[tickText,anchor=base west] at (288.995bp,23.513bp) {6};
  \draw[axisLine] (367.867bp,36.591bp) -- (367.867bp,33.101bp);
  \node[tickText,anchor=base west] at (365.320bp,23.513bp) {8};
  \draw[axisLine] (444.193bp,36.591bp) -- (444.193bp,33.101bp);
  \node[tickText,anchor=base west] at (439.099bp,23.513bp) {10};
  \draw[axisLine] (41.575bp,44.325bp) -- (38.085bp,44.325bp);
  \node[tickText,anchor=base west] at (21.841bp,41.286bp) {0.0};
  \draw[axisLine] (41.575bp,63.926bp) -- (38.085bp,63.926bp);
  \node[tickText,anchor=base west] at (21.841bp,60.887bp) {0.1};
  \draw[axisLine] (41.575bp,83.528bp) -- (38.085bp,83.528bp);
  \node[tickText,anchor=base west] at (21.841bp,80.488bp) {0.2};
  \draw[axisLine] (41.575bp,103.129bp) -- (38.085bp,103.129bp);
  \node[tickText,anchor=base west] at (21.841bp,100.090bp) {0.3};
  \draw[axisLine] (41.575bp,122.730bp) -- (38.085bp,122.730bp);
  \node[tickText,anchor=base west] at (21.841bp,119.691bp) {0.4};
  \draw[axisLine] (41.575bp,142.332bp) -- (38.085bp,142.332bp);
  \node[tickText,anchor=base west] at (21.841bp,139.293bp) {0.5};
  \draw[axisLine] (41.575bp,161.933bp) -- (38.085bp,161.933bp);
  \node[tickText,anchor=base west] at (21.841bp,158.894bp) {0.6};
  \draw[axisLine] (41.575bp,181.535bp) -- (38.085bp,181.535bp);
  \node[tickText,anchor=base west] at (21.841bp,178.496bp) {0.7};
  \draw[axisLine] (41.575bp,201.136bp) -- (38.085bp,201.136bp);
  \node[tickText,anchor=base west] at (21.841bp,198.097bp) {0.8};
  \node[axisLabel,anchor=base west] at (222.694bp,9.482bp) {Settlement period};
  \node[axisLabel,rotate=90,anchor=base west] at (15.559bp,74.462bp) {Congestion (MW)};

  \draw[legendBox] (376.367bp,170.342bp) rectangle (499.847bp,203.232bp);
  \draw[seriesLine,baseColor] (379.157bp,197.577bp) -- (393.157bp,197.577bp);
  \markercircle{baseColor}{386.157}{197.577};
  \node[legendText,anchor=base west] at (398.757bp,195.127bp) {Base};
  \draw[seriesLine,centColor] (379.157bp,187.311bp) -- (393.157bp,187.311bp);
  \markersquare{centColor}{386.157}{187.311};
  \node[legendText,anchor=base west] at (398.757bp,184.861bp) {Cent.};
  \draw[seriesLine,kktColor] (379.157bp,177.045bp) -- (393.157bp,177.045bp);
  \markertriangle{kktColor}{386.157}{177.045};
  \node[legendText,anchor=base west] at (398.757bp,174.595bp) {KKT/MILP};
  \draw[seriesLine,quadFYColor] (444.320bp,187.311bp) -- (458.320bp,187.311bp);
  \markerdiamond{quadFYColor}{451.320}{187.311};
  \node[legendText,anchor=base west] at (463.920bp,184.861bp) {Quad-FY};
\end{tikzpicture}%
}

\newcommand{\FigureCaseB}{%
\begin{tikzpicture}[x=1bp,y=1bp]
  \path[use as bounding box] (0bp,0bp) rectangle (510.56bp,229.814bp);
  \fill[white] (0bp,0bp) rectangle (510.56bp,229.814bp);
  \fill[white] (41.575bp,36.591bp) rectangle (503.345bp,206.740bp);

  \draw[gridLine] (62.565bp,36.591bp) -- (62.565bp,206.740bp);
  \draw[gridLine] (138.890bp,36.591bp) -- (138.890bp,206.740bp);
  \draw[gridLine] (215.216bp,36.591bp) -- (215.216bp,206.740bp);
  \draw[gridLine] (291.541bp,36.591bp) -- (291.541bp,206.740bp);
  \draw[gridLine] (367.867bp,36.591bp) -- (367.867bp,206.740bp);
  \draw[gridLine] (444.193bp,36.591bp) -- (444.193bp,206.740bp);
  \draw[gridLine] (41.575bp,42.723bp) -- (503.345bp,42.723bp);
  \draw[gridLine] (41.575bp,69.836bp) -- (503.345bp,69.836bp);
  \draw[gridLine] (41.575bp,96.950bp) -- (503.345bp,96.950bp);
  \draw[gridLine] (41.575bp,124.064bp) -- (503.345bp,124.064bp);
  \draw[gridLine] (41.575bp,151.178bp) -- (503.345bp,151.178bp);
  \draw[gridLine] (41.575bp,178.292bp) -- (503.345bp,178.292bp);
  \draw[gridLine] (41.575bp,205.406bp) -- (503.345bp,205.406bp);

  \begin{scope}
    \clip (41.575bp,36.591bp) rectangle (503.345bp,206.740bp);
    \draw[seriesLine,baseColor] (62.565bp,156.142bp) -- (100.727bp,190.771bp) -- (138.890bp,145.592bp) -- (177.053bp,156.284bp) -- (215.216bp,157.308bp) -- (253.379bp,183.960bp) -- (291.541bp,199.005bp) -- (329.704bp,119.198bp) -- (367.867bp,88.521bp) -- (406.030bp,162.001bp) -- (444.193bp,121.605bp) -- (482.355bp,89.340bp);
    \markercircle{baseColor}{62.565}{156.142};
    \markercircle{baseColor}{100.727}{190.771};
    \markercircle{baseColor}{138.890}{145.592};
    \markercircle{baseColor}{177.053}{156.284};
    \markercircle{baseColor}{215.216}{157.308};
    \markercircle{baseColor}{253.379}{183.960};
    \markercircle{baseColor}{291.541}{199.005};
    \markercircle{baseColor}{329.704}{119.198};
    \markercircle{baseColor}{367.867}{88.521};
    \markercircle{baseColor}{406.030}{162.001};
    \markercircle{baseColor}{444.193}{121.605};
    \markercircle{baseColor}{482.355}{89.340};
    \draw[seriesLine,centColor] (62.565bp,86.763bp) -- (100.727bp,86.763bp) -- (138.890bp,86.763bp) -- (177.053bp,86.763bp) -- (215.216bp,86.763bp) -- (253.379bp,86.763bp) -- (291.541bp,86.763bp) -- (329.704bp,86.763bp) -- (367.867bp,86.763bp) -- (406.030bp,86.763bp) -- (444.193bp,72.456bp) -- (482.355bp,52.423bp);
    \markersquare{centColor}{62.565}{86.763};
    \markersquare{centColor}{100.727}{86.763};
    \markersquare{centColor}{138.890}{86.763};
    \markersquare{centColor}{177.053}{86.763};
    \markersquare{centColor}{215.216}{86.763};
    \markersquare{centColor}{253.379}{86.763};
    \markersquare{centColor}{291.541}{86.763};
    \markersquare{centColor}{329.704}{86.763};
    \markersquare{centColor}{367.867}{86.763};
    \markersquare{centColor}{406.030}{86.763};
    \markersquare{centColor}{444.193}{72.456};
    \markersquare{centColor}{482.355}{52.423};
    \draw[seriesLine,quadFYColor] (62.565bp,92.673bp) -- (100.727bp,82.683bp) -- (138.890bp,86.010bp) -- (177.053bp,93.051bp) -- (215.216bp,89.150bp) -- (253.379bp,101.193bp) -- (291.541bp,68.865bp) -- (329.704bp,94.828bp) -- (367.867bp,93.112bp) -- (406.030bp,79.121bp) -- (444.193bp,72.454bp) -- (482.355bp,57.803bp);
    \markerdiamond{quadFYColor}{62.565}{92.673};
    \markerdiamond{quadFYColor}{100.727}{82.683};
    \markerdiamond{quadFYColor}{138.890}{86.010};
    \markerdiamond{quadFYColor}{177.053}{93.051};
    \markerdiamond{quadFYColor}{215.216}{89.150};
    \markerdiamond{quadFYColor}{253.379}{101.193};
    \markerdiamond{quadFYColor}{291.541}{68.865};
    \markerdiamond{quadFYColor}{329.704}{94.828};
    \markerdiamond{quadFYColor}{367.867}{93.112};
    \markerdiamond{quadFYColor}{406.030}{79.121};
    \markerdiamond{quadFYColor}{444.193}{72.454};
    \markerdiamond{quadFYColor}{482.355}{57.803};
    \draw[seriesLine,kktColor] (62.565bp,92.679bp) -- (100.727bp,82.685bp) -- (138.890bp,86.012bp) -- (177.053bp,89.738bp) -- (215.216bp,89.152bp) -- (253.379bp,111.422bp) -- (291.541bp,97.081bp) -- (329.704bp,96.330bp) -- (367.867bp,89.748bp) -- (406.030bp,79.123bp) -- (444.193bp,58.657bp) -- (482.355bp,44.325bp);
    \markertriangle{kktColor}{62.565}{92.679};
    \markertriangle{kktColor}{100.727}{82.685};
    \markertriangle{kktColor}{138.890}{86.012};
    \markertriangle{kktColor}{177.053}{89.738};
    \markertriangle{kktColor}{215.216}{89.152};
    \markertriangle{kktColor}{253.379}{111.422};
    \markertriangle{kktColor}{291.541}{97.081};
    \markertriangle{kktColor}{329.704}{96.330};
    \markertriangle{kktColor}{367.867}{89.748};
    \markertriangle{kktColor}{406.030}{79.123};
    \markertriangle{kktColor}{444.193}{58.657};
    \markertriangle{kktColor}{482.355}{44.325};
  \end{scope}

  \draw[axisLine] (41.575bp,36.591bp) rectangle (503.345bp,206.740bp);
  \draw[axisLine] (62.565bp,36.591bp) -- (62.565bp,33.101bp);
  \node[tickText,anchor=base west] at (60.018bp,23.513bp) {0};
  \draw[axisLine] (138.890bp,36.591bp) -- (138.890bp,33.101bp);
  \node[tickText,anchor=base west] at (136.343bp,23.513bp) {2};
  \draw[axisLine] (215.216bp,36.591bp) -- (215.216bp,33.101bp);
  \node[tickText,anchor=base west] at (212.669bp,23.513bp) {4};
  \draw[axisLine] (291.541bp,36.591bp) -- (291.541bp,33.101bp);
  \node[tickText,anchor=base west] at (288.995bp,23.513bp) {6};
  \draw[axisLine] (367.867bp,36.591bp) -- (367.867bp,33.101bp);
  \node[tickText,anchor=base west] at (365.320bp,23.513bp) {8};
  \draw[axisLine] (444.193bp,36.591bp) -- (444.193bp,33.101bp);
  \node[tickText,anchor=base west] at (439.099bp,23.513bp) {10};
  \draw[axisLine] (41.575bp,42.723bp) -- (38.085bp,42.723bp);
  \node[tickText,anchor=base west] at (21.841bp,39.683bp) {0.0};
  \draw[axisLine] (41.575bp,69.836bp) -- (38.085bp,69.836bp);
  \node[tickText,anchor=base west] at (21.841bp,66.797bp) {0.5};
  \draw[axisLine] (41.575bp,96.950bp) -- (38.085bp,96.950bp);
  \node[tickText,anchor=base west] at (21.841bp,93.911bp) {1.0};
  \draw[axisLine] (41.575bp,124.064bp) -- (38.085bp,124.064bp);
  \node[tickText,anchor=base west] at (21.841bp,121.025bp) {1.5};
  \draw[axisLine] (41.575bp,151.178bp) -- (38.085bp,151.178bp);
  \node[tickText,anchor=base west] at (21.841bp,148.139bp) {2.0};
  \draw[axisLine] (41.575bp,178.292bp) -- (38.085bp,178.292bp);
  \node[tickText,anchor=base west] at (21.841bp,175.253bp) {2.5};
  \draw[axisLine] (41.575bp,205.406bp) -- (38.085bp,205.406bp);
  \node[tickText,anchor=base west] at (21.841bp,202.367bp) {3.0};
  \node[axisLabel,anchor=base west] at (222.694bp,9.482bp) {Settlement period};
  \node[axisLabel,rotate=90,anchor=base west] at (15.559bp,74.462bp) {Congestion (MW)};

  \draw[legendBox] (378.399bp,170.342bp) rectangle (499.849bp,203.232bp);
  \draw[seriesLine,baseColor] (381.189bp,197.577bp) -- (395.189bp,197.577bp);
  \markercircle{baseColor}{388.189}{197.577};
  \node[legendText,anchor=base west] at (400.789bp,195.127bp) {Base};
  \draw[seriesLine,centColor] (381.189bp,187.311bp) -- (395.189bp,187.311bp);
  \markersquare{centColor}{388.189}{187.311};
  \node[legendText,anchor=base west] at (400.789bp,184.861bp) {Cent.};
  \draw[seriesLine,quadFYColor] (381.189bp,177.045bp) -- (395.189bp,177.045bp);
  \markerdiamond{quadFYColor}{388.189}{177.045};
  \node[legendText,anchor=base west] at (400.789bp,174.595bp) {Quad-FY};
  \draw[seriesLine,kktColor] (444.320bp,187.311bp) -- (458.320bp,187.311bp);
  \markertriangle{kktColor}{451.320}{187.311};
  \node[legendText,anchor=base west] at (463.920bp,184.861bp) {KKT/MILP};
\end{tikzpicture}%
}

\makeatletter

\newlength{\CongNativeWidth}
\newlength{\CongPanelTargetWidth}
\newlength{\CongSquareSlot}

\newcommand{\CongIEEEFinalTickPt}{6.0}
\newcommand{\CongIEEEFinalAxisPt}{7.0}
\newcommand{\CongIEEEFinalLegendPt}{3.6}
\newcommand{\CongIEEEFinalSeriesBp}{0.85}
\newcommand{\CongIEEEFinalAxisBp}{0.45}
\newcommand{\CongIEEEFinalGridBp}{0.25}
\newcommand{\CongIEEEFinalMarkerBp}{1.35}
\newcommand{\CongIEEEFinalMarkerLineBp}{0.50}

\providecommand{\CongCap}[1]{#1}

\newcommand{\CongSetIEEEPanelStyle}[1]{%
  \setlength{\CongPanelTargetWidth}{#1}%
  \pgfmathsetmacro{\CongPanelScale}{\strip@pt\CongPanelTargetWidth/\strip@pt\CongNativeWidth}%
  \pgfmathsetmacro{\CongRawTick}{\CongIEEEFinalTickPt/\CongPanelScale}%
  \pgfmathsetmacro{\CongRawTickLead}{1.18*\CongRawTick}%
  \pgfmathsetmacro{\CongRawAxis}{\CongIEEEFinalAxisPt/\CongPanelScale}%
  \pgfmathsetmacro{\CongRawAxisLead}{1.18*\CongRawAxis}%
  \pgfmathsetmacro{\CongRawLegend}{\CongIEEEFinalLegendPt/\CongPanelScale}%
  \pgfmathsetmacro{\CongRawLegendLead}{1.18*\CongRawLegend}%
  \pgfmathsetmacro{\CongRawSeries}{\CongIEEEFinalSeriesBp/\CongPanelScale}%
  \pgfmathsetmacro{\CongRawAxisLine}{\CongIEEEFinalAxisBp/\CongPanelScale}%
  \pgfmathsetmacro{\CongRawGrid}{\CongIEEEFinalGridBp/\CongPanelScale}%
  \pgfmathsetmacro{\CongRawMarker}{\CongIEEEFinalMarkerBp/\CongPanelScale}%
  \pgfmathsetmacro{\CongRawMarkerDiag}{1.41421356237*\CongRawMarker}%
  \pgfmathsetmacro{\CongRawMarkerLine}{\CongIEEEFinalMarkerLineBp/\CongPanelScale}%
  \tikzset{%
    gridLine/.style={draw=gridGray,line width=\CongRawGrid bp,dash pattern=on 0.5bp off 0.825bp},%
    axisLine/.style={draw=black,line width=\CongRawAxisLine bp},%
    seriesLine/.style={line width=\CongRawSeries bp,line cap=rect,line join=round},%
    legendBox/.style={draw=legendBorder,fill=white,line width=\CongRawAxisLine bp,rounded corners=1.39bp},%
    tickText/.style={font=\normalfont\fontsize{\CongRawTick pt}{\CongRawTickLead pt}\selectfont,inner sep=0bp,outer sep=0bp,text=black},%
    axisLabel/.style={font=\normalfont\fontsize{\CongRawAxis pt}{\CongRawAxisLead pt}\selectfont,inner sep=0bp,outer sep=0bp,text=black},%
    legendText/.style={font=\normalfont\fontsize{\CongRawLegend pt}{\CongRawLegendLead pt}\selectfont,inner sep=0bp,outer sep=0bp,text=black},%
  }%
  \renewcommand{\markercircle}[3]{\path[draw=##1,fill=##1,line width=\CongRawMarkerLine bp] (##2bp,##3bp) circle[radius=\CongRawMarker bp]}%
  \renewcommand{\markersquare}[3]{\path[draw=##1,fill=##1,line width=\CongRawMarkerLine bp] ($ (##2bp,##3bp)+(-\CongRawMarker bp,-\CongRawMarker bp) $) rectangle ($ (##2bp,##3bp)+(\CongRawMarker bp,\CongRawMarker bp) $)}%
  \renewcommand{\markertriangle}[3]{\path[draw=##1,fill=##1,line width=\CongRawMarkerLine bp] (##2bp,{##3bp+\CongRawMarker bp}) -- ({##2bp-\CongRawMarker bp},{##3bp-\CongRawMarker bp}) -- ({##2bp+\CongRawMarker bp},{##3bp-\CongRawMarker bp}) -- cycle}%
  \renewcommand{\markerdiamond}[3]{\path[draw=##1,fill=##1,line width=\CongRawMarkerLine bp] (##2bp,{##3bp-\CongRawMarkerDiag bp}) -- ({##2bp+\CongRawMarkerDiag bp},##3bp) -- (##2bp,{##3bp+\CongRawMarkerDiag bp}) -- ({##2bp-\CongRawMarkerDiag bp},##3bp) -- cycle}%
  \renewcommand{\markercross}[3]{\draw[##1,line width=\CongRawMarkerLine bp,line cap=round] ({##2bp-\CongRawMarker bp},{##3bp-\CongRawMarker bp}) -- ({##2bp+\CongRawMarker bp},{##3bp+\CongRawMarker bp}) ({##2bp-\CongRawMarker bp},{##3bp+\CongRawMarker bp}) -- ({##2bp+\CongRawMarker bp},{##3bp-\CongRawMarker bp})}%
}

\newcommand{\IEEECongPanel}[2][\columnwidth]{%
  \begingroup
    \CongSetIEEEPanelStyle{#1}%
    \resizebox{#1}{!}{#2}%
  \endgroup
}

\newcommand{\IEEECongA}[1][\columnwidth]{\IEEECongPanel[#1]{\FigureCaseA}}
\newcommand{\IEEECongB}[1][\columnwidth]{\IEEECongPanel[#1]{\FigureCaseB}}
\newcommand{\IEEECongC}[1][\columnwidth]{\IEEECongPanel[#1]{\FigureCaseC}}
\newcommand{\IEEECongRone}[1][\columnwidth]{\IEEECongPanel[#1]{\FigureCaseRone}}

\makeatother

\newtheorem{assumption}{Assumption}

\newtheorem{theorem}{Theorem}

\newtheorem{proposition}{Proposition}

\newtheorem{remark}{Remark}

\newcommand{\R}{\mathbb{R}}
\newcommand{\Spsd}{\mathbb{S}_{+}}
\newcommand{\Spd}{\mathbb{S}_{++}}
\newcommand{\Hubs}{\mathcal{H}}
\newcommand{\Time}{\mathcal{T}}
\newcommand{\Lines}{\mathcal{L}}
\newcommand{\Buses}{\mathcal{N}}
\newcommand{\X}{\mathcal{X}}
\newcommand{\D}{\mathcal{D}}

\newcommand{\dd}{\Delta t}
\newcommand{\pos}[1]{\left[#1\right]_{+}}

\newcommand{\dom}{\operatorname{dom}}

\newcommand{\col}{\operatorname{col}}

\newcommand{\argmin}{\operatorname*{arg\,min}}

\newcommand{\ones}{\mathbf{1}}

\newcolumntype{L}[1]{>{\raggedright\arraybackslash}p{#1}}
\newif\ifshownomenclature
\shownomenclaturetrue 

\usepackage[hidelinks]{hyperref}
\usepackage{orcidlink}

\newcommand{\equalcontrib}{\textsuperscript{\ensuremath{\dagger}}}
\newcommand{\authororcid}[1]{\,\orcidlink{#1}}

\usepackage{enumitem}

\begin{document}


\title{Dynamic Congestion Pricing in Distribution Networks via a Convex-Analytic Bilevel Reformulation}
\author{Reza~Rahimi~Baghbadorani\authororcid{0000-0001-6058-2485}\equalcontrib,
        Ali~Nikseresht\authororcid{0000-0002-6107-7699}\equalcontrib,~\IEEEmembership{Member,~IEEE,}
        Jehum~Cho\authororcid{0000-0003-3962-9095},
        and~Yashar~Ghiassi-Farrokhfal\authororcid{0000-0001-6365-1001}%
\thanks{\equalcontrib Reza Rahimi Baghbadorani and Ali Nikseresht contributed equally to this work.}%
\thanks{Reza Rahimi Baghbadorani, Ali Nikseresht, and Yashar Ghiassi-Farrokhfal are with the Rotterdam School of Management, Erasmus University, 3062 PA, Rotterdam, The Netherlands (e-mail: rahimibaghbadorani@rsm.nl; ali.nikseresht@ieee.org; y.ghiassi@rsm.nl).}%
\thanks{Jehum Cho is with the Erasmus School of Economics, Erasmus University, 3062 PA, Rotterdam, The Netherlands (e-mail: j.cho@ese.eur.nl).}%
\thanks{This work was supported by NGF--GroenvermogenNL 2022 under Grant NGF.1611.22.021.}%
}


\markboth{}{Rahimi Baghbadorani \MakeLowercase{\textit{et al.}}: Fenchel--Young Sequential Convex Approximation for Bilevel Dynamic Congestion Pricing}

\maketitle

\begin{abstract}
Dynamic congestion pricing is an important tool for managing congestion and coordinating distributed energy resources in active distribution networks. However, scalable mechanisms that preserve participant autonomy remain computationally challenging because the operator--resource interaction is naturally bilevel. This paper develops a convex-analytic framework in which a distribution system operator computes dynamic congestion-price adders, while decentralized energy hubs schedule flexible demand, storage, local generation, renewable curtailment, and grid import/export. Unlike conventional single-level reformulations that replace lower-level problems by Karush--Kuhn--Tucker (KKT) conditions, complementarity constraints, and big-\(M\) linearizations, the proposed model represents follower feasibility and optimality through a Fenchel--Young equality involving the convex conjugate of an extended follower objective. The remaining bilinear price-response term is handled through a penalized difference-of-convex reformulation and sequential convex approximation. The method solves continuous convex subproblems and avoids the constraint-wise complementarity and branch-and-bound scaling of mixed-integer KKT reformulations; its main computational drivers are price-response dimension and conjugate evaluation rather than binary encodings of follower inequalities. On augmented IEEE 13- and 34-node feeders, it reduces congestion by \(96.89\%\) and \(96.45\%\), respectively, approaches centralized full-information dispatch, certifies price-response consistency to numerical precision, and yields lower residual congestion than time-limited KKT incumbents within the computational budget.
\end{abstract}

\begin{IEEEkeywords}
Bilevel optimization, congestion management, distribution networks, distributed energy resources, dynamic pricing, Fenchel--Young inequality, sequential convex approximation.
\end{IEEEkeywords}

\ifshownomenclature
\begin{table*}[htb]
\centering
{\bfseries Nomenclature\par}
\vspace{0.6ex}
\setlength{\tabcolsep}{2.3pt}
\renewcommand{\arraystretch}{0.92}
{\scriptsize
\begin{tabular*}{\textwidth}{@{\extracolsep{\fill}}L{0.155\textwidth}L{0.325\textwidth}L{0.155\textwidth}L{0.325\textwidth}@{}}
\toprule
\textbf{Symbol} & \textbf{Meaning} & \textbf{Symbol} & \textbf{Meaning}\\
\midrule
\multicolumn{2}{@{}l}{\emph{Sets, prices, and network quantities}} & \multicolumn{2}{l@{}}{\emph{Hub variables, convex analysis, and algorithms}}\\
$\Hubs,\Time,\Lines,\Buses$ & Hubs, time periods, monitored lines, and buses. & $x_i,z_i,\X_i$ & Implemented schedule, candidate schedule, and feasible set of hub $i$.\\
$i,t,\ell;T,\dd$ & Hub, time, and line indices; horizon length and market step. & $A_i x_i\le b_i$, $E_i x_i=d_i$ & Local inequality/equality constraints of hub $i$.\\
$\R^n,\mathbb S_+^n,\mathbb S_{++}^n$ & Euclidean space, positive semidefinite cone, and positive definite cone. & $m_{it}^{\rm in},m_{it}^{\rm out},w_{it}$ & Import, export, and net withdrawal, $w_{it}=m_{it}^{\rm in}-m_{it}^{\rm out}$.\\
$\delta_{it},\delta_i,\D$ & DSO congestion-price adder, hub price-adder vector, and admissible set. & $D_{it}^{\rm fix},u_{it},u_{it}^{0},a_{it}$ & Fixed load, flexible load, baseline flexible load, and absolute deviation.\\
$\underline\delta_i,\bar\delta_i$ & Price lower/upper bounds. & $p_{it}^{\rm ch},p_{it}^{\rm dis},e_{it}$ & Battery charge, discharge, and state of energy.\\
$\lambda_t^{\rm buy},\lambda_t^{\rm sell}$ & Reference buy/sell prices. & $PV_{it}^{\rm avail},c_{it}^{\rm pv},g_{it}$ & PV availability, PV curtailment, and dispatchable generation.\\
$\tau_i^{\rm base},\pi_{it}^{\rm buy},\pi_{it}^{\rm sell}$ & Base tariff and effective buy/sell prices. & $\chi^{\rm sell}$ & Export-credit pass-through of the congestion adder.\\
$\lambda_i(\delta_i)$ & Affine price coefficient in the follower objective. & $\eta_i^{\rm ch},\eta_i^{\rm dis}$ & Battery efficiencies.\\
$C^{\rm line},C^{\rm sub},C^{\rm total}$ & Reported line, substation, and total overload residuals. & $E^{\rm flex,shift},E^{\rm bat,thr}$ & Flexible-load shift and battery-throughput metrics.\\
$f_{\ell t},f_{\ell t}^{\rm bg},H_{\ell i}$ & Total flow, background flow, and PTDF coefficient. & $M_i^{\rm in},M_i^{\rm out}$ & Selection matrices for import/export components.\\ $S_t,S_t^{\rm bg},\bar f_\ell,\bar S^{\pm}$ & Substation exchange, background exchange, line limit, and substation import/export limits. & $I_{\X_i},\psi_i,\phi_i$ & Indicator, base follower cost, and extended objective $\phi_i=\psi_i+I_{\X_i}$.\\
$o_{\ell t}^{\rm line},o_t^{\rm sub}$ & Line and substation overload epigraph variables. & $\phi_i^*,\phi_i^{**},\partial\phi_i$ & Convex conjugate, biconjugate, and subdifferential.\\
$\Gamma,J^{\rm cong},J^{\rm op},R$ & Overload penalty, congestion metric, operating-cost proxy, and price regularizer. & $D_{\phi_i}$ & Fenchel--Young gap.\\
$\mu_i,\nu_i$ & KKT multipliers used only in the MPEC benchmark. & $Q_i,q_i$ & Quadratic follower-cost matrix and linear coefficient.\\
$[\cdot]_+$, $\col(\cdot)$ & Positive part and vector-stacking operator. & $h_i,o_i,\widehat D_i^k$ & DC components and majorized gap at iteration $k$.\\ $\rho_k,\rho_{\max};\ \gamma_\rho,\eta_\rho;\ K_{\max}$ & Penalty, cap, growth factor, stall threshold, and iteration limit.  & $r_i,r_{\max},s^k$; $\varepsilon_{\rm gap},\varepsilon_{\rm step}$ & Hub residual, maximum residual, step diagnostic, and tolerances.\\ $\alpha^{\rm line},\alpha^{\rm sub}$ & Line- and substation-congestion penalty weights. & $\beta^{\rm level},\beta^{\rm fair}$ & Price-level and cross-hub fairness weights.\\
\bottomrule
\end{tabular*}}
\vspace{-1.2ex}
\end{table*}
\fi


\section{Introduction}
\label{sec:introduction}

\IEEEPARstart{D}{istribution} networks are undergoing a structural transition from passively operated radial feeders to actively managed cyber-physical systems in which distributed energy resources (DERs), electric vehicles, batteries, heat pumps, flexible demand, and multi-energy hubs materially affect network loading and local market outcomes \cite{sotkiewicz2006nodal,sundstrom2012ev,nick2014storage,mancarella2013realtime,ferro2026embedded}. This transition has moved congestion management from a predominantly planning-oriented concern to a recurring operational problem for distribution system operators (DSOs), because privately operated devices can create overloads during peak consumption, concentrated electric-vehicle charging, or high distributed-generation export periods \cite{liu2014dcp,huang2016dynamic}. Demand response (DR) and price-responsive flexibility are therefore central instruments for improving network utilization, integrating renewable resources, deferring reinforcement, and reducing operating costs \cite{deng2015survey,pandey2022hierarchical}.

Price-based coordination is attractive because it can influence flexible resources while preserving a degree of autonomy for customers, aggregators, and energy hubs \cite{schweppe1988spot,corradi2013price}. Distribution nodal-pricing, distribution congestion price (DCP), and dynamic-tariff mechanisms seek to communicate network scarcity through economically interpretable signals rather than through direct control \cite{sotkiewicz2006nodal,liu2014dcp,huang2016dynamic}. Dynamic tariffs have been developed for household DR, electric-vehicle charging, feeder reconfiguration, line-loss reduction, and load-serving-entity pricing in distribution systems \cite{sundstrom2012ev,liu2014dcp,huang2016dynamic,nguyen2016dynamic,pandey2022hierarchical}. Related retail and incentive mechanisms, including stepwise tariffs, coupon-based DR, real-time pricing, Stackelberg supply-demand balancing, hierarchical incentive-based DR, reinforcement-learning-based dynamic pricing, and double-signal pricing for batteries, show that price design must balance network efficiency, customer response, implementability, and economic acceptability \cite{li2013stepwise,zhong2013coupon,corradi2013price}.

The interaction between a price-setting operator and price-responsive participants is naturally hierarchical. Consequently, Stackelberg games, bilevel programming, and multi-level optimization have become the dominant modeling frameworks for dynamic pricing and incentive design in power systems. These frameworks have been widely applied to electricity-market pricing, distribution network management, demand-side flexibility, electric-vehicle charging, virtual power plant (VPP) coordination, multi-microgrid operation, and DER aggregation \cite{dempe2002foundations,dempe2013bilevel,tushar2012ev,wei2015pricing,asimakopoulou2013leader,asimakopoulou2015hierarchical,zhang2018procurement,vahid2019self,yi2020vpp_scheduling,yi2020adn,parizy2020threelevel,yi2021vpp_der,xu2022competitive}. In these formulations, the system operator determines prices, incentives, or dispatch decisions, while decentralized participants optimize their own objectives subject to local operational constraints. Decomposition, distributed optimization, and column-generation techniques have further improved the scalability of these models by exploiting separability among heterogeneous resources. However, while such approaches can accelerate computation, they do not eliminate the equilibrium constraints that arise when prices are decision variables, leaving the fundamental computational challenge of bilevel pricing largely intact \cite{dantzig1960decomposition,anjos2019decentralized}.


Despite their widespread adoption, bilevel pricing problems remain computationally challenging. The dominant solution strategy replaces the lower-level optimization problem with its Karush--Kuhn--Tucker (KKT) conditions, strong duality, or equivalent primal--dual optimality conditions, producing a mathematical program with equilibrium constraints (MPEC) or, after complementarity linearization, a mixed-integer reformulation \cite{wei2015pricing,nguyen2016dynamic,zhang2018procurement,yi2020adn,yi2021vpp_der}. Although this approach is exact under suitable convexity and regularity assumptions, it introduces dual variables and complementarity constraints whose size grows with the complexity of the lower-level operational model rather than with the pricing decisions themselves \cite{dempe2002foundations,dempe2013bilevel}. Moreover, multi-period tariff optimization is intrinsically difficult—even simplified formulations are NP-hard—motivating approximation, decomposition, and structure-exploiting reformulations for practical applications \cite{kovacs2018complexity}. These computational challenges become particularly pronounced when lower-level models capture detailed DERs with intertemporal operating constraints, including flexible demand, storage dynamics, renewable curtailment, import/export decisions, dispatchable generation, and other flexible energy resources \cite{mancarella2013realtime,li2021cies}.

This paper addresses this challenge by developing a convex-analytic reformulation for bilevel dynamic congestion pricing in active distribution networks with affine price-responsive followers. The central idea is to encode each decentralized hub's feasibility and optimality through a Fenchel--Young (FY) optimality gap associated with an extended convex follower objective, rather than expanding the lower-level problem into KKT stationarity and complementarity conditions. The exact optimality-gap representation remains nonconvex in the joint price-response variables; therefore, the proposed method treats the residual price-response coupling through a difference-of-convex (DC) decomposition and sequential convex approximation (SCA). This yields continuous convex subproblems, computable lower-level consistency residuals, and a scalable alternative to complementarity-based KKT/MPEC reformulations. 
Accordingly, we consider an active distribution network in which the DSO determines local congestion price adders while decentralized energy hubs optimize flexible demand, storage, local generation, renewable curtailment, and import/export. The resulting net withdrawals are mapped to feeder line and substation loading through a PTDF-based network model. Fig.~\ref{fig:conceptual_architecture} illustrates the resulting leader–follower architecture and the interaction between network pricing, decentralized optimization, and the proposed FY optimality-gap representation.

\begin{figure}[htb]
\centering
\resizebox{\columnwidth}{!}{%
\begin{tikzpicture}[
    font=\scriptsize,
    box/.style={draw, rounded corners, align=center, minimum width=23mm, minimum height=8mm, fill=gray!7},
    hub/.style={draw, rounded corners, align=center, minimum width=22mm, minimum height=7mm, fill=gray!4},
    cert/.style={draw, rounded corners, align=center, minimum width=25mm, minimum height=8mm, fill=gray!7},
    arr/.style={-{Latex[length=2mm]}, thick}
]
\node[box] (dso) at (0,0) {DSO\\price design};
\node[box] (price) at (2.8,0) {dynamic adders\\$\delta_{it}$};
\node[hub] (h1) at (5.7,0.9) {Hub 1\\convex QP};
\node[hub] (h2) at (5.7,0) {Hub 2\\convex QP};
\node[hub] (h3) at (5.7,-0.9) {Hub 3\\convex QP};
\node[box] (net) at (8.7,0) {PTDF network\\flows/overloads};
\node[cert] (loss) at (11.8,0.8) {FY residuals};
\node[cert] (update) at (11.8,-0.8) {SCA update\\convex surrogate};
\draw[arr] (dso) -- (price);
\draw[arr] (price) -- (h1.west);
\draw[arr] (price) -- (h2.west);
\draw[arr] (price) -- (h3.west);
\draw[arr] (h1.east) -- (net.west);
\draw[arr] (h2.east) -- (net.west);
\draw[arr] (h3.east) -- (net.west);
\draw[arr] (net) -- (loss);
\draw[arr] (net) -- (update);
\draw[arr] (loss) -- (update);
\draw[arr] (update.south) |- ++(0,-0.6) -| (dso.south);
\end{tikzpicture}}
\caption{A DSO--hub architecture example. Price adders are leader decisions; hubs solve convex price-response problems; induced withdrawals determine network congestion; and FY residuals certify lower-level consistency.}
\label{fig:conceptual_architecture}
\end{figure}

The main contributions are as follows.
\begin{itemize}
    \item We develop a bilevel dynamic congestion-pricing framework for active distribution networks in which a distribution system operator coordinates decentralized, price-responsive energy hubs through time-varying congestion price adders. The formulation captures realistic hub operation, including flexible demand, battery storage, renewable curtailment, dispatchable generation, and network-constrained power exchanges, while consistently linking hub decisions to distribution-network congestion.
    \item We derive a convex-analytic reformulation of lower-level optimality for bilevel pricing problems with affine price entry using the FY optimality gap. The proposed representation embeds feasibility through convex conjugacy and avoids explicit KKT multipliers, complementarity constraints, and big-\(M\) linearizations that are characteristic of conventional MPEC reformulations.
    \item We develop an FY-SCA solution framework by combining the FY reformulation with a difference-of-convex decomposition and sequential convex approximation. The resulting algorithm solves a sequence of convex optimization problems while providing residual-based certificates of lower-level optimality.
    \item We validate the proposed framework on IEEE 13- and 34-node active distribution feeders and compare it against baseline operation, centralized dispatch, and KKT/MPEC bilevel pricing. The numerical study demonstrates substantial congestion relief, near-centralized performance, low optimality-gap residuals, and significantly improved computational efficiency relative to conventional KKT-based reformulations.
\end{itemize}

The remainder of this paper is organized as follows. Section~\ref{sec:preliminaries} states the bilevel pricing model and convex-analytic notation. Section~\ref{sec:fy_reformulation} derives the FY optimality-loss reformulations and the SCA algorithm. Section~\ref{sec:congestion_specialization} tailors the method to distribution-network congestion pricing. Section~\ref{sec:numerical_results} presents the numerical protocol and results, and Section~\ref{sec:conclusion} concludes the paper. The notation used throughout the paper is summarized in the Nomenclature.





\section{Convex-Analytic Bilevel Pricing Model}
\label{sec:preliminaries}
This section formalizes the convex-analytic modeling framework underlying the proposed bilevel pricing approach. We consider a class of bilevel pricing problems in which a leader determines a price vector and decentralized participants respond by solving convex optimization problems. The key idea is to absorb the follower feasibility constraints into an extended-valued objective, allowing lower-level optimality to be represented through a single FY equality rather than an expanded system of KKT conditions. We first introduce the convex-analytic notation required for this construction. Afterwards, we use the FY inequality to elaborate on price-response optimality as a nonnegative gap. We then express the generic bilevel structure. Finally, we embed this representation into a generic DSO's dynamic congestion-pricing model.

\subsection{Convex-Analytic Preliminaries}
\label{subsec:convex_preliminaries}

Let \(\R^n\) denote the \(n\)-dimensional Euclidean space. The inner product is \(\langle x,y\rangle=x^\top y\), and \(\|x\|\) denotes the Euclidean norm unless otherwise stated.

A function \(\phi:\R^n\to\R\cup\{+\infty\}\) is proper if its effective domain \(\dom\phi=\{x:\phi(x)<+\infty\}\) is nonempty and \(\phi(x)>-\infty\) for all \(x\). For a nonempty closed convex set \(\X\), its indicator function is
\begin{equation}
I_{\X}(x)=
\begin{cases}
0, & x\in\X,\\
+\infty, & x\notin\X.
\end{cases}
\label{eq:indicator_function}
\end{equation}
The convex conjugate of a proper function \(\phi\) is
\begin{equation}
    \phi^*(y)=\sup_{x\in\R^n}\{y^\top x-\phi(x)\}.
    \label{eq:conjugate_definition}
\end{equation}
If \(\phi\) is proper, closed, and convex, then \(\phi^{**}=\phi\) under the lower-semicontinuity assumptions \cite{rockafellar1970convex}. The subdifferential of \(\phi\) at \(x\in\dom\phi\) is
\begin{equation}
    \partial\phi(x)=\{y\in\R^n:\phi(z)\ge \phi(x)+y^\top(z-x),\;\forall z\in\R^n\}.
    \label{eq:subdifferential}
\end{equation}

\subsection{Fenchel--Young Gap and Optimality Conditions}
\label{subsec:fy_gap}

For any proper closed convex function \(\phi\), the FY inequality states
\begin{equation}
    \phi(x)+\phi^*(y)-y^\top x\ge0,
    \label{eq:fy_ineq}
\end{equation}
with equality if and only if \(y\in\partial\phi(x)\)\cite{rockafellar1970convex,boyd2004convex}. The FY divergence associated with \(\phi\) is
\begin{equation}
    D_\phi(x,y)=\phi(x)+\phi^*(y)-y^\top x.
    \label{eq:fy_divergence}
\end{equation}
Thus, \(D_\phi(x,y)\ge0\), and \(D_\phi(x,y)=0\) when \(y\in\partial\phi(x)\). In a price-response problem of the form \(\min_z\{\phi(z)+\lambda^\top z\}\), the first-order optimality condition is \(-\lambda\in\partial\phi(x)\). Therefore, follower optimality is equivalent to
\begin{equation}
    D_\phi(x,-\lambda)=\phi(x)+\phi^*(-\lambda)+\lambda^\top x=0.
    \label{eq:fy_gap_price}
\end{equation}

\noindent \textit{Notation}:
For vectors \(a\) and \(b\), \(\col(a,b)\) denotes vertical concatenation. The notation \(M\in\mathbb{S}_{+}^{n}\) means that \(M\) is positive semidefinite. The positive part is \(\pos{r}=\max\{r,0\}\). For a convex set \(\D\), \(\Pi_\D(y)\) denotes the Euclidean projection of \(y\) onto \(\D\), whenever the projection is used algorithmically. All vectors are column vectors. Inequalities between vectors are componentwise. Unless stated otherwise, all optimization problems are deterministic over a finite horizon.

\subsection{Affine Price-Entry in Bilevel Structure}
\label{sec:generic_template}
\subsubsection{Generic bilevel problem}
\label{subsec:generic_bilevel_model}

Consider an upper-level price or incentive vector $\lambda$
and a lower-level response \(x(\lambda)\). The affine price-entry bilevel problem is
\begin{equation}
\begin{aligned}
\min_{\lambda\in\Lambda}\quad &F(x(\lambda),\lambda)\\
\text{s.t.}\quad &x(\lambda)\in\argmin_{z\in\X}\{\psi(z)+\lambda^\top z\},
\end{aligned}
\label{eq:generic_bilevel}
\end{equation}
where \(F\) is the leader objective, $\Lambda\subseteq\R^{n}$ and \(\X\subseteq\R^n\) are the leader's and followers' feasible sets, respectively, and we assume they are nonempty, closed, and convex. Strongly convex function \(\psi:\R^n\to\R\cup\{+\infty\}\) represents the price-independent part of the follower cost, disutility, degradation, or negative utility. The equal dimensions of \(x\) and \(\lambda\) in \eqref{eq:generic_bilevel} are used only for clarity. If the leader variable is \(\eta\in\R^p\) and prices enter as \(B\eta+d\), then one replaces \(\lambda\) by the affine coefficient vector \(B\eta+d\) throughout the analysis.

In the computational model and KKT benchmark, we use the polyhedral follower set
\begin{equation}
    \X=\{z\in\R^n:A z\le b,\;E z=d\},
    \label{eq:generic_polyhedron}
\end{equation}
where \(A\in\R^{m_I\times n}\), \(E\in\R^{m_E\times n}\), \(b\in\R^{m_I}\), and \(d\in\R^{m_E}\). This polyhedral structure refers only to the local hub/VPP operating constraints. The FY reformulation remains valid for any other proper, closed, convex follower objective with affine price entry. Defining
\begin{equation}
    \phi(z)=\psi(z)+I_{\X}(z),
    \label{eq:generic_phi_definition}
\end{equation}
the lower-level problem can be written equivalently as
\(x(\lambda)\in\argmin_z\{\phi(z)+\lambda^\top z\}\).

\subsubsection{Quadratic prototype}
\label{subsec:quadratic_prototype}

A key special case is
\begin{equation}
    \psi(z)=\frac12 z^\top Qz+q^\top z,
    \qquad Q\in\mathbb{S}_{+}^{n},
    \label{eq:quadratic_prototype_clean}
\end{equation}
with \(z\in\X\). If \(Q \in \mathbb{S}_{++}^{n}\) on the feasible affine hull of \(\X\), the follower response is unique. However, if \(Q \in \mathbb{S}_{+}^{n}\), the FY reformulation remains valid, but the response may be set-valued and the leader model should be interpreted in the optimistic or selected-response sense unless a tie-breaking regularizer is added.

The quadratic specification is a modeling and regularization choice rather than a requirement of the FY identity. It provides a tractable representation of marginal discomfort, deviation from preferred schedules, battery degradation, and empirically identified price sensitivity. It is also economically meaningful: many demand-response and DER decisions exhibit diminishing marginal benefit or increasing marginal adjustment cost, so larger deviations from preferred operation become progressively less attractive. In distribution-level DLMP design, price sensitivity similarly yields strictly convex quadratic aggregator problems with unique responses \cite{huang2015dlmp_qp}. If \(Q=0\), the follower becomes a linear program and the FY equality remains exact; only uniqueness may be lost. A selected-response convention or an arbitrarily small term \((\epsilon/2)\|z\|_2^2\), \(\epsilon>0\), restores a single-valued response when required.

\begin{remark}[Role of leader convexity]
Joint convexity of \(F(x,\lambda)\) is useful for the convex SCA subproblems, but it does not make the exact single-level FY reformulation convex, because the optimality equality contains the bilinear term \(\lambda^\top x\). Therefore, convexity claims in this paper are made for the majorized subproblems, not for the exact bilevel-equivalent formulation.
\end{remark}

\subsection{Dynamic Congestion-Pricing Model}
\label{sec:model}

In this subsection, we introduce a wider dynamic pricing model perspective, and then later in section \ref{sec:congestion_specialization} we tailor it to a congestion problem, which concerns the numerical studies.

\subsubsection{Leader--follower architecture}
\label{subsec:leader_follower_architecture}

Consider a DSO that coordinates a set of energy hubs \(\Hubs\) over time horizon \(\Time\). Here \(i\in\Hubs\) indexes hubs, \(t\in\Time\) indexes market intervals, and \(\dd\) denotes the interval length. The DSO chooses a dynamic congestion-price vector
\begin{equation}
    \delta=\col(\delta_i:i\in\Hubs),\qquad
    \delta_i=\col(\delta_{it}:t\in\Time),
    \label{eq:delta_definition}
\end{equation}
from an admissible price set \(\D\). Each hub \(i\) observes \(\delta_i\) and solves a local scheduling problem. The induced response is denoted by \(x_i(\delta_i)\), and the aggregate response by \(x(\delta)=\col(x_i(\delta_i):i\in\Hubs)\).

The admissible dynamic-price set is modeled as
\begin{equation}
\begin{aligned}
    \D=\{\delta:\;&\underline\delta_i\le \delta_{it}\le \bar\delta_i,\quad \forall i,t\},
\end{aligned}
\label{eq:price_set}
\end{equation}
where \(\underline\delta_i\) and \(\bar\delta_i\) denote the lower and upper admissible adders for hub \(i\). Additional market-design constraints, such as average-revenue neutrality, price-zone restrictions, or cross-hub fairness caps, can be appended to \(\D\) as convex constraints.

\subsubsection{Follower response model}
\label{subsec:follower_response}

For each hub \(i\), let \(x_i\in\R^{n_i}\) collect all local variables over the horizon. The hub response is represented by the convex quadratic problem
\begin{equation}
\begin{aligned}
    x_i(\delta_i)\in\argmin_{z_i\in\X_i}\quad
    &\frac12 z_i^\top Q_i z_i+q_i^\top z_i+\lambda_i(\delta_i)^\top z_i,
\end{aligned}
\label{eq:follower_qp}
\end{equation}
where \(Q_i\in\Spsd^{n_i}\), \(q_i\in\R^{n_i}\), \(\X_i\) is a nonempty closed convex feasible set, and \(\lambda_i(\delta_i)\in\R^{n_i}\) is the vector of price-induced linear coefficients. In the energy-hub model, \(\lambda_i(\delta_i)\) is constructed from \(\pi^{\rm buy}_{it}\), \(\pi^{\rm sell}_{it}\), and import/export selection matrices.

Define
\begin{equation}
    \phi_i(z_i)=\frac12 z_i^\top Q_i z_i+q_i^\top z_i+I_{\X_i}(z_i).
    \label{eq:phi_i_definition}
\end{equation}
Then \eqref{eq:follower_qp} can be written compactly as
\begin{equation}
    x_i(\delta_i)\in\argmin_{z_i\in\R^{n_i}}\{\phi_i(z_i)+\lambda_i(\delta_i)^\top z_i\}.
    \label{eq:follower_phi_i}
\end{equation}
The function \(\phi_i\) includes all local feasibility constraints through the indicator function. This is the key step that makes the FY representation compact.


\subsubsection{Upper-level congestion-management objective}
\label{subsec:upper_level_objective}

The DSO's objective combines network congestion, operating-cost proxies, and price-regularization terms. A generic bilevel dynamic-pricing problem is
\begin{equation}
\begin{aligned}
\min_{\delta\in\D}\quad &F(x(\delta),\delta)\\
\text{s.t.}\quad &x_i(\delta_i)\in\argmin_{z_i\in\R^{n_i}}\{\phi_i(z_i)+\lambda_i(\delta_i)^\top z_i\},\quad \forall i\in\Hubs .
\end{aligned}
\label{eq:bilevel_pricing}
\end{equation}
The function \(F\) is specified in Section~\ref{sec:congestion_specialization}. For the theoretical development, it is sufficient that \(F\) be proper, closed, and convex in the variables of each SCA subproblem. This covers squared or Huber overload penalties, convex operating-cost terms, and quadratic price regularization.

\begin{assumption}[Standing assumptions]\label{subsec:standing_assumptions}
We impose the following assumptions throughout the paper:
   \begin{enumerate}[label=(\roman*), itemsep = 0mm, topsep = 0mm, leftmargin = 7mm]
    \item \label{ass:convex_price_response}
    {\bf{Convex price-response structure}:} For each \(i\in\Hubs\), \(\X_i\subseteq\R^{n_i}\) is nonempty, closed, and convex; \(Q_i\in\Spsd^{n_i}\); \(\phi_i\) in \eqref{eq:phi_i_definition} is proper, closed, and convex; and the price enters the follower objective affinely through \(\lambda_i(\delta_i)^\top z_i\).

    \item \label{ass:wellposed} {\bf{Well-posed follower response}:} For every \(\delta\in\D\), the follower problem \eqref{eq:follower_qp} has at least one optimal solution. If uniqueness is needed, either \(Q_i\in\Spd^{n_i}\) on the feasible affine hull of \(\X_i\), or a small positive definite tie-breaking regularizer is added to \(Q_i\). With \(Q_i=0\), the formulation remains valid but the response may be nonunique unless a selection rule is specified.

    \item \label{ass:leader_convex}{\bf{Convex leader feasible set and objective}:} The price set \(\D\) is nonempty, closed, and convex. The leader's objective used in each convex approximation is proper, closed, and convex in its explicit decision variables.

    \item \label{ass:conjugate_regular}{\bf{Conjugate regularity}:} For each \(i\), the conjugate \(\phi_i^*\) is finite on the relevant price-coefficient domain, or its epigraph admits a closed convex representation over that domain. Standard relative-interior constraint qualifications are assumed whenever an explicit dual representation of \(\phi_i^*\) is used.
    \end{enumerate}
\end{assumption}





\section{Optimality-Loss Reformulation and Sequential Convex Approximation}
\label{sec:fy_reformulation}

\subsection{Embedding Feasibility Into the Follower Objective}
\label{subsec:embedded_feasibility}

The indicator representation in \eqref{eq:phi_i_definition} moves all local feasibility constraints into \(\phi_i\). Thus, the follower problem is unconstrained in the extended-real-valued sense. This step permits primal feasibility, convex costs, and intertemporal device constraints to be treated through one closed convex object. For a polyhedral hub model,
\begin{equation}
   \X_i=\{z_i:A_i z_i\le b_i,\;E_i z_i=d_i\},
    \label{eq:polyhedral_Xi}
\end{equation}
where \(A_i\) and \(E_i\) collect inequality and equality constraints, respectively. The indicator \(I_{\X_i}\) then includes power-balance equations, energy-conservation constraints, storage dynamics, and operational bounds.

\subsection{Exact Optimality-Gap Characterization}
\label{subsec:exact_gap_characterization}

\begin{theorem}[Exact Fenchel--Young reformulation]
\label{thm:fy_exact}
Suppose Assumptions~\ref{subsec:standing_assumptions}--\ref{ass:convex_price_response} and \ref{subsec:standing_assumptions}--\ref{ass:wellposed} hold. For any \(i\in\Hubs\) and any pair \((x_i,\delta_i)\), the following statements are equivalent:
\begin{enumerate}
    \item \(x_i\in\argmin_{z_i}\{\phi_i(z_i)+\lambda_i(\delta_i)^\top z_i\}\).
    \item \(-\lambda_i(\delta_i)\in\partial\phi_i(x_i)\).
    \item \(D_{\phi_i}(x_i,-\lambda_i(\delta_i))=0\), i.e.,
    \begin{equation}
    \phi_i(x_i)+\phi_i^*(-\lambda_i(\delta_i))+\lambda_i(\delta_i)^\top x_i=0.
    \label{eq:fy_equal_i}
    \end{equation}
\end{enumerate}
Consequently, \eqref{eq:bilevel_pricing} is equivalent to the single-level problem
\begin{equation}
\begin{aligned}
\min_{x,\delta}\quad &F(x,\delta)\\
\text{s.t.}\quad &\delta\in\D,\\
&D_{\phi_i}(x_i,-\lambda_i(\delta_i))=0,
\quad \forall i\in\Hubs .
\end{aligned}
\label{eq:single_level_fy_exact}
\end{equation}
\end{theorem}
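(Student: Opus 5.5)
The plan is to prove the chain (1)$\Leftrightarrow$(2)$\Leftrightarrow$(3) for a fixed hub $i$ and a fixed pair $(x_i,\delta_i)$. We then lift it hub by hub to the single-level problem \eqref{eq:single_level_fy_exact}. Write $\lambda=\lambda_i(\delta_i)$ for brevity.

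\emph{Step (1)$\Leftrightarrow$(2).} Statement (1) says $\phi_i(z)+\lambda^\top z\ge \phi_i(x_i)+\lambda^\top x_i$ for all $z\in\R^{n_i}$, with $x_i\in\dom\phi_i=\X_i$; the right-hand side is finite because $\phi_i$ is proper. Rearranged, this reads $\phi_i(z)\ge\phi_i(x_i)+(-\lambda)^\top(z-x_i)$ for all $z$, which is exactly $-\lambda\in\partial\phi_i(x_i)$ by definition \eqref{eq:subdifferential}. No constraint qualification is needed, since feasibility is carried by the indicator inside $\phi_i$ and the subdifferential is that of the extended function. (Equivalently, Fermat's rule $0\in\partial(\phi_i+\lambda^\top\cdot)(x_i)=\partial\phi_i(x_i)+\lambda$, where the sum rule is trivial because one summand is linear.)

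\emph{Step (2)$\Leftrightarrow$(3).} We argue directly from the conjugate definition \eqref{eq:conjugate_definition} rather than just citing \eqref{eq:fy_ineq}, to keep it self-contained. For any $x_i\in\dom\phi_i$ we have $\phi_i^*(-\lambda)\ge -\lambda^\top x_i-\phi_i(x_i)$, so $D_{\phi_i}(x_i,-\lambda)\ge0$. Equality holds iff $x_i$ attains the supremum in $\sup_z\{-\lambda^\top z-\phi_i(z)\}$, i.e.\ iff $x_i$ minimizes $\phi_i(z)+\lambda^\top z$; this is statement (1), hence (2). The one subtlety is the case $x_i\notin\X_i$: then $\phi_i(x_i)=+\infty$, (3) fails, and (1)--(2) fail as well (the subdifferential is only defined on $\dom\phi_i$, and minimizers lie in $\dom\phi_i$ because the problem has finite value). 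We also need $\phi_i^*(-\lambda)>-\infty$, which holds since $\phi_i$ is proper, and $\phi_i^*(-\lambda)<+\infty$ when (1) holds, by Assumption~\ref{subsec:standing_assumptions}--\ref{ass:wellposed} (existence of a minimizer makes the infimum finite). So the expression in \eqref{eq:fy_equal_i} is never of the indeterminate form $\infty-\infty$. This careful handling of extended values is the main, though mild, obstacle. Closedness of $\phi_i$ from Assumption~\ref{subsec:standing_assumptions}--\ref{ass:convex_price_response} is not strictly needed for this direction, but it ensures consistency with the biconjugate statement.

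\emph{Single-level equivalence.} The bilevel constraint in \eqref{eq:bilevel_pricing} requires $x_i\in\argmin_z\{\phi_i(z)+\lambda_i(\delta_i)^\top z\}$ for each $i$. By the equivalence above, this set equals $\{x_i: D_{\phi_i}(x_i,-\lambda_i(\delta_i))=0\}$. Hence the feasible sets of $(x,\delta)$ in the two problems coincide, and with the same objective $F$ the optimal values and solution sets agree. When responses are nonunique, this is exactly the optimistic interpretation stated after \eqref{eq:quadratic_prototype_clean}, since $x$ becomes a joint decision variable. Well-posedness guarantees that for every $\delta\in\D$ the constraint set in $x$ is nonempty, so no admissible price is lost in the reformulation.
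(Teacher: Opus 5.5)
Your proof is correct and follows essentially the same route as the paper: both use the conjugate definition to identify the FY gap $\phi_i(x_i)+\phi_i^*(-\lambda)+\lambda^\top x_i$ with the suboptimality $\phi_i(x_i)+\lambda^\top x_i-\inf_z\{\phi_i(z)+\lambda^\top z\}$, and combine this with the FY inequality. Your version is slightly more self-contained than the paper's: you prove (1)$\Leftrightarrow$(2) directly from the subgradient inequality and (3)$\Rightarrow$(1) by sup-attainment, where the paper cites the FY equality case, and you also handle the extended-value cases ($x_i\notin\X_i$, $\phi_i^*(-\lambda)=+\infty$) that the paper leaves implicit.
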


\begin{proof}
The first statement is equivalent to
\begin{equation}
    \phi_i(x_i)+\lambda_i(\delta_i)^\top x_i
    \le
    \phi_i(z_i)+\lambda_i(\delta_i)^\top z_i,
    \quad \forall z_i.
\end{equation}
Equivalently,
\begin{equation}
    \phi_i(x_i)+\lambda_i(\delta_i)^\top x_i
    -\inf_{z_i}\{\phi_i(z_i)+\lambda_i(\delta_i)^\top z_i\}\le0.
\end{equation}
Using the definition of the conjugate,
\begin{equation}
    -\inf_{z_i}\{\phi_i(z_i)+\lambda_i(\delta_i)^\top z_i\}
    =\phi_i^*(-\lambda_i(\delta_i)).
\end{equation}
Thus, follower optimality implies
\begin{equation}
    \phi_i(x_i)+\phi_i^*(-\lambda_i(\delta_i))+\lambda_i(\delta_i)^\top x_i\le0.
\end{equation}
FY inequality gives the reverse inequality for all \((x_i,\delta_i)\). Hence equality holds. Equality in FY inequality is equivalent to \(-\lambda_i(\delta_i)\in\partial\phi_i(x_i)\), proving equivalence. Substituting this condition for each follower in \eqref{eq:bilevel_pricing} gives \eqref{eq:single_level_fy_exact}.
\end{proof}

\begin{remark}[Exactness and nonconvexity]
The formulation \eqref{eq:single_level_fy_exact} is exact under the stated convexity and existence assumptions. It is not generally convex because \(\lambda_i(\delta_i)^\top x_i\) is bilinear when both the price and the response are decision variables. Therefore, the role of the next section is not to claim that the exact single-level problem is convex, but to construct a disciplined convex decomposition of a penalized version of it.
\end{remark}

\noindent\textbf{Conjugate evaluation for quadratic-polyhedral followers:} For the quadratic hub model, the conjugate in \eqref{eq:fy_equal_i} can be evaluated or represented through a continuous convex dual problem. If
\(\X_i=\{z_i:A_i z_i\le b_i,\ E_i z_i=d_i\}\) and \(Q_i\in\Spd^{n_i}\), then for any coefficient \(y_i\),
\begin{equation}
\begin{aligned}
\phi_i^*(y_i)=\min_{\mu_i\ge0,\nu_i}\;&\frac12 v_i^\top Q_i^{-1}v_i+b_i^\top\mu_i+d_i^\top\nu_i,\\
\text{s.t.}\;&v_i=y_i-q_i-A_i^\top\mu_i-E_i^\top\nu_i .
\end{aligned}
\label{eq:conjugate_qp_dual}
\end{equation}
Thus, the SCA model may include \eqref{eq:conjugate_qp_dual} as a convex epigraph. 
If \(Q_i\in\Spsd^{n_i}\), the same construction is applied after the small regularization used for response uniqueness, or through the corresponding closed conic epigraph when available.

\subsection{Relation to KKT and MPEC Reformulations}
\label{subsec:relation_to_kkt}

If \(\X_i\) is polyhedral as in \eqref{eq:polyhedral_Xi}, a conventional KKT reformulation of \eqref{eq:follower_qp} introduces multipliers \(\mu_i\ge0\) and \(\nu_i\) and imposes
\begin{subequations}
\begin{align}
    &A_i x_i\le b_i,
    \qquad E_i x_i=d_i,\label{eq:kkt_primal}\\
    &Q_i x_i+q_i+\lambda_i(\delta_i)+A_i^\top\mu_i+E_i^\top\nu_i=0,\label{eq:kkt_stationarity}\\
    &\mu_i\ge0,\qquad
    \mu_i^\top(b_i-A_i x_i)=0.
    \label{eq:kkt_complementarity}
\end{align}
\label{eq:kkt_system}
\end{subequations}
The complementarity equation \eqref{eq:kkt_complementarity} is the source of nonconvexity. In mixed-integer reformulations, each complementarity pair is commonly encoded through Fortuny--Amat big-\(M\) constraints, introducing binary variables and requiring valid bounds on dual variables and slacks.

The FY reformulation avoids explicit complementarity. It represents lower-level optimality by the scalar gap \(D_{\phi_i}\). If \(\phi_i^*\) has a closed form, no lower-level dual variables are needed. If an explicit convex epigraph of \(\phi_i^*\) is constructed through duality, auxiliary continuous variables may appear, but complementarity and big-\(M\) disjunctions are still avoided. This distinction is important in high-dimensional energy-hub models, and, in fact, the proposed method shifts the computational burden from binary complementarity enforcement to continuous convex approximation.


\subsection{Sequential Convex Approximation}
\label{sec:sca}

\subsubsection{Penalized single-level reformulation}
\label{subsec:penalized_reformulation}

Rather than imposing \(D_{\phi_i}=0\) as a hard equality, define the penalized objective
\begin{equation}
    \mathcal{P}_\rho(x,\delta)=F(x,\delta)+\rho\sum_{i\in\Hubs}D_{\phi_i}(x_i,-\lambda_i(\delta_i)),
    \label{eq:penalized_objective}
\end{equation}
where \(\rho>0\) is a penalty parameter. The penalized problem is
\begin{equation}
    \min_{x,\delta}\;\mathcal{P}_\rho(x,\delta)
    \quad \text{s.t.}\quad \delta\in\D .
    \label{eq:penalized_problem}
\end{equation}
The penalty is nonnegative and vanishes when the follower responses are consistent with the selected prices. A large penalty discourages price-response inconsistency. Exact finite-penalty recovery of \eqref{eq:single_level_fy_exact} requires additional error-bound or calmness conditions; the algorithm below is therefore stated as an SCA method for the penalized problem, not as an unconditional global solver for the original bilevel program.

\subsubsection{Difference-of-convex decomposition}
\label{subsec:dc_decomposition}

For notational clarity, first consider a follower with price coefficient \(\lambda_i\) of the same dimension as \(x_i\). The FY gap is
\begin{equation}
D_{\phi_i}(x_i,-\lambda_i)=\phi_i(x_i)+\phi_i^*(-\lambda_i)+\lambda_i^\top x_i.
\end{equation}
Using the polarization identity,
\begin{equation}
    \lambda_i^\top x_i
    =\frac14\|x_i+\lambda_i\|^2-\frac14\|x_i-\lambda_i\|^2,
    \label{eq:polarization}
\end{equation}
we write
\begin{equation}
    D_{\phi_i}(x_i,-\lambda_i)=h_i(x_i,\lambda_i)-o_i(x_i,\lambda_i),
    \label{eq:dc_gap}
\end{equation}
where
\begin{subequations}
\begin{align}
    h_i(x_i,\lambda_i)&=\phi_i(x_i)+\phi_i^*(-\lambda_i)+\frac14\|x_i+\lambda_i\|^2,\label{eq:h_i}\\
    o_i(x_i,\lambda_i)&=\frac14\|x_i-\lambda_i\|^2.
    \label{eq:o_i}
\end{align}
\end{subequations}
Both \(h_i\) and \(o_i\) are convex. At iteration \(k\), the concave term \(-o_i\) is upper-bounded by its first-order affine decomposition at \((x_i^k,\lambda_i^k)\):
\begin{equation}
\begin{aligned}
    \widehat D_i^k(x_i,\lambda_i)=&\;h_i(x_i,\lambda_i)-o_i(x_i^k,\lambda_i^k)\\
    &-\left\langle \nabla o_i(x_i^k,\lambda_i^k),\col(x_i-x_i^k,\lambda_i-\lambda_i^k)\right\rangle .
\end{aligned}
\label{eq:majorized_gap}
\end{equation}
Equivalently, after dropping constants that do not affect the minimizer,
\begin{align}
\widehat D_i^k(x_i,\lambda_i)
\equiv{}&\phi_i(x_i)+\phi_i^*(-\lambda_i)
+\frac14\|x_i+\lambda_i\|^2 \nonumber\\
&-\frac12(x_i^k-\lambda_i^k)^\top(x_i-\lambda_i).
\label{eq:majorized_gap_no_constants}
\end{align}

In the distribution-pricing application, the actual price coefficient is \(\lambda_i(\delta_i)\). The SCA subproblem is obtained by replacing \(\lambda_i\) in \eqref{eq:majorized_gap} with the affine mapping \(\lambda_i(\delta_i)\). Since composition with an affine mapping preserves convexity, the resulting subproblem remains convex under Assumption~\ref{ass:leader_convex}.

\subsubsection{Algorithmic implementation}
\label{subsec:algorithm}

\begin{algorithm}[!t]
\caption{Fenchel--Young SCA for dynamic congestion pricing}
\label{alg:fy_sca}
\begin{algorithmic}[1]
\REQUIRE Initial price \(\delta^0\in\D\), response \(x^0\), penalty \(\rho_0>0\), penalty cap \(\rho_{\max}\ge\rho_0\), update factor \(\gamma_\rho>1\), stall threshold \(\eta_\rho\in(0,1)\), tolerances \(\varepsilon_{\rm gap},\varepsilon_{\rm step}\), and \(K_{\max}\).
\ENSURE Dynamic price \(\delta^\star\), response \(x^\star\), FY residuals, and convergence diagnostics.
\STATE Compute \(\lambda_i^0=\lambda_i(\delta_i^0)\), \(r_i^0=D_{\phi_i}(x_i^0,-\lambda_i^0)\), and \(r_{\max}^0=\max_{i\in\Hubs}r_i^0\).
\FOR{\(k=0,1,\ldots,K_{\max}-1\)}
    \STATE Build the convex majorized gaps \(\widehat D_i^k(x_i,\lambda_i(\delta_i))\) using \eqref{eq:majorized_gap}.
    \STATE Solve the convex SCA subproblem
    \begin{equation*}
    \begin{aligned}
    (x^{k+1},\delta^{k+1})\in\argmin_{x,\delta}\
    &F(x,\delta)+\rho_k\sum_{i\in\Hubs}\widehat D_i^k(x_i,\lambda_i(\delta_i))\\
    \text{s.t.}\quad &\delta\in\D.
    \end{aligned}
    \end{equation*}
    using an explicit epigraph or oracle for \(\phi_i^*\).
    \STATE Compute \(\lambda_i^{k+1}=\lambda_i(\delta_i^{k+1})\) and residuals
    \begin{equation*}
        r_i^{k+1}=D_{\phi_i}(x_i^{k+1},-\lambda_i^{k+1}),\quad
        r_{\max}^{k+1}=\max_{i\in\Hubs}r_i^{k+1}.
    \end{equation*}
    \STATE Compute \(s^{k+1}=\|\delta^{k+1}-\delta^k\|_2+\sum_i\|x_i^{k+1}-x_i^k\|_2\).
    \IF{\(r_{\max}^{k+1}\le\varepsilon_{\rm gap}\) and \(s^{k+1}\le\varepsilon_{\rm step}\)}
        \STATE \textbf{break}
    \ENDIF
    \IF{\(k\ge1\), \(r_{\max}^{k+1}>\varepsilon_{\rm gap}\), and \(r_{\max}^{k+1}>\eta_\rho r_{\max}^{k}\)}
    \STATE Set \(\rho_{k+1}=\min\{\gamma_\rho\rho_k,\rho_{\max}\}\).
\ELSE
    \STATE Set \(\rho_{k+1}=\rho_k\).
\ENDIF
\ENDFOR
\STATE Return the best iterate according to the original penalized objective and report all residuals and network metrics.
\end{algorithmic}
\end{algorithm}


Algorithm~\ref{alg:fy_sca} summarizes the offline FY-SCA procedure used by the DSO to compute dynamic congestion-price adders. The FY gap and its convex decomposition are defined at the hub-horizon level, because \(x_i\) collects the full multi-period response of hub \(i\). Accordingly, the penalty term is summed over hubs, not over independent period-wise gaps.

The penalty update is deliberately conservative. The parameter \(\eta_\rho\) declares a stall when the maximum FY residual does not decrease by the prescribed fraction. In that case, increasing \(\rho_k\) shifts weight toward price-response consistency, while the cap \(\rho_{\max}\) limits ill-conditioning of the convex SCA subproblems. When the residual decreases sufficiently, \(\rho_k\) is held fixed.

The algorithm is used for offline price design with forecasts of demand, renewable availability, network states, and calibrated aggregate hub-response information. Once the price adders are selected, implementation is decentralized: the DSO broadcasts \(\delta_i\), or the induced coefficient \(\lambda_i(\delta_i)\), and each hub solves its own convex follower problem. Thus, the final operation is induced by prices rather than imposed through direct dispatch. {\color{black}The computational workflow is summarized in Fig.~B.1}.

\subsubsection{Convergence and stationarity}
\label{subsec:convergence}

\begin{proposition}[Convexity of each SCA subproblem]
\label{prop:convex_subproblem}
Under Assumptions~\ref{ass:convex_price_response}--\ref{ass:leader_convex}, if \(F\) is convex in the explicit variables of the SCA subproblem and \(\lambda_i(\delta_i)\) is affine, then each subproblem in Algorithm~\ref{alg:fy_sca} is convex.
\end{proposition}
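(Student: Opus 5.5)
The plan is to show that the objective of the subproblem in Algorithm~\ref{alg:fy_sca} is a sum of proper, closed, convex functions of the joint variable $(x,\delta)$, and that its feasible set $\D$ is convex. Since $\rho_k>0$ and nonnegative combinations preserve convexity, it suffices to treat $F$ and each majorized gap $\widehat D_i^k(x_i,\lambda_i(\delta_i))$ separately. $F$ is convex by hypothesis, and $\D$ is closed and convex by Assumption~\ref{ass:leader_convex}. The work is therefore in the majorized gaps.

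For fixed $k$, I will use the expanded form \eqref{eq:majorized_gap_no_constants}, whose terms are the following.

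\begin{enumerate}
\item $\phi_i(x_i)$. By Assumption~\ref{ass:convex_price_response}, $\phi_i$ is proper, closed, and convex; the indicator $I_{\X_i}$ inside it only restricts $x_i$ to the convex set $\X_i$.
\item $\phi_i^*(-\lambda_i)$. A conjugate is a pointwise supremum of affine functions of its argument, so it is always closed and convex. Precomposing with the affine map $\lambda_i\mapsto-\lambda_i$ keeps it convex. Assumption~\ref{ass:conjugate_regular} ensures it is finite, or has a closed convex epigraph, on the relevant domain, so the term is proper and representable. In the quadratic-polyhedral case \eqref{eq:conjugate_qp_dual}, it is the partial minimization of a jointly convex function over a convex set, which again gives convexity.
\item $\tfrac14\|x_i+\lambda_i\|^2$. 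This is a convex quadratic composed with a linear map of $(x_i,\lambda_i)$, hence convex.
\item $-\tfrac12(x_i^k-\lambda_i^k)^\top(x_i-\lambda_i)$. Its coefficient vector is fixed at the previous iterate, so the term is affine.
\end{enumerate}

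Together these show that $\widehat D_i^k$ is jointly convex in $(x_i,\lambda_i)$. Equivalently, $h_i$ is convex, and subtracting the affine linearization of the convex function $o_i$ preserves convexity.

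Next I substitute $\lambda_i=\lambda_i(\delta_i)$, which is affine in $\delta_i$. The map $(x_i,\delta_i)\mapsto(x_i,\lambda_i(\delta_i))$ is affine, and a convex function composed with an affine map is convex. Hence $(x,\delta)\mapsto\widehat D_i^k(x_i,\lambda_i(\delta_i))$ is convex. Summing over $i\in\Hubs$, multiplying by $\rho_k>0$, and adding $F$ gives a convex objective. Minimizing it over $\D\times\R^{n}$, with the $\X_i$ constraints carried implicitly by $\phi_i$, is a convex program.

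The main obstacle is the conjugate term: I must make sure it does not spoil properness or closedness, since otherwise it could take the value $-\infty$ or have a non-closed domain. I will handle this by noting that $\phi_i$ is proper, so $\phi_i^*>-\infty$ everywhere, and conjugates are always closed. Assumption~\ref{ass:conjugate_regular} then provides finiteness or an epigraph representation. For the dual representation \eqref{eq:conjugate_qp_dual}, I will invoke the stated constraint qualification so that the dual value equals $\phi_i^*$ and the epigraph is exact.

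A minor point to address is that the dimension-matching convention, under which $\lambda_i$ has the same dimension as $x_i$, is harmless. When $\lambda_i(\delta_i)$ is built through selection matrices, it is still affine in $\delta_i$, so the composition argument applies unchanged.
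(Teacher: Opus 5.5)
Your proposal is correct and follows essentially the same argument as the paper. Both show that $\widehat D_i^k$ is convex term by term in the expanded form \eqref{eq:majorized_gap_no_constants}: $\phi_i$ and $\phi_i^*$ are convex, the quadratic is convex, and the linearized term is affine. Convexity survives the affine substitution $\lambda_i=\lambda_i(\delta_i)$, and adding convex $F$ with $\rho_k>0$ over the convex set $\D$ completes the argument. Your additional checks on properness and closedness of $\phi_i^*$ and on the dual epigraph are details the paper leaves implicit; they do not change the route.
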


\begin{proof}
The terms \(\phi_i(x_i)\), \(\phi_i^*(-\lambda_i(\delta_i))\), and \(\frac14\|x_i+\lambda_i(\delta_i)\|^2\) are convex because \(\phi_i\) and \(\phi_i^*\) are convex and \(\lambda_i(\delta_i)\) is affine. The linearized term in \eqref{eq:majorized_gap_no_constants} is affine. Hence \(\widehat D_i^k\) is convex. Adding convex \(F\) and imposing convex \(\D\) preserves convexity.
\end{proof}

\begin{proposition}[Stationarity of limit points]
\label{prop:stationarity}
Suppose the level set of the penalized objective is compact, the convex subproblems are solved exactly, \(\rho_k=\rho\) is fixed, and standard regularity conditions for the DC algorithm hold. Then every accumulation point of the sequence generated by Algorithm~\ref{alg:fy_sca} is a critical point of the penalized problem \eqref{eq:penalized_problem}.
\end{proposition}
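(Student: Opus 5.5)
The plan is to recognize Algorithm~\ref{alg:fy_sca} with fixed \(\rho_k=\rho\) as a majorization--minimization (DCA-type) scheme for the DC objective
\[
\mathcal P_\rho(x,\delta)=G(x,\delta)-H(x,\delta),
\]
with the two convex pieces
\[
G(x,\delta)=F(x,\delta)+\rho\sum_{i\in\Hubs}h_i\bigl(x_i,\lambda_i(\delta_i)\bigr)+I_{\D}(\delta),
\qquad
H(x,\delta)=\rho\sum_{i\in\Hubs}o_i\bigl(x_i,\lambda_i(\delta_i)\bigr).
\]
The first piece \(G\) is convex by Proposition~\ref{prop:convex_subproblem} and the affinity of \(\lambda_i\). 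The second piece \(H\) is convex and continuously differentiable, since it is a quadratic composed with an affine map. "Critical point" will be taken in the DC sense:
\[
0\in\partial G(x^\star,\delta^\star)-\nabla H(x^\star,\delta^\star),
\]
which, by differentiability of \(H\), coincides with Clarke/limiting stationarity of \(\mathcal P_\rho\) on \(\D\).

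The first step is monotone descent. By \eqref{eq:dc_gap}--\eqref{eq:majorized_gap}, the convexity of \(o_i\) gives \(\widehat D_i^k\ge D_{\phi_i}\) everywhere, with equality at \((x_i^k,\lambda_i^k)\). Hence the subproblem objective \(\widehat{\mathcal P}^k\) majorizes \(\mathcal P_\rho\) and touches it at the current iterate, and exact minimization yields
\[
\mathcal P_\rho(z^{k+1})\le\widehat{\mathcal P}^k(z^{k+1})\le\widehat{\mathcal P}^k(z^k)=\mathcal P_\rho(z^k),\qquad z=(x,\delta).
\]
I will sharpen this by noting that \(o_i\) is a strongly convex quadratic in \((x_i,\lambda_i)\). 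One can therefore take either the linearization gap \(\tfrac{\rho}{4}\|(x_i-\lambda_i)-(x_i^k-\lambda_i^k)\|^2\) or, more robustly, the strong convexity contributed by \(h_i\) through the \(\tfrac14\|x_i+\lambda_i\|^2\) term and \(\phi_i\) (with \(Q_i\) positive definite after the tie-breaking regularization). This gives a sufficient-decrease inequality
\[
\mathcal P_\rho(z^{k+1})\le\mathcal P_\rho(z^k)-c\|w^{k+1}-w^k\|^2,
\]
where \(w\) collects \((x_i,\lambda_i(\delta_i))\). This is the "standard regularity" I intend to invoke: strong convexity of \(G\) or \(H\), as in the classical DCA theory of Pham Dinh and Le Thi.

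Next I will pass to the limit. The iterates remain in the level set \(\{\mathcal P_\rho\le\mathcal P_\rho(z^0)\}\), which is compact by hypothesis, so accumulation points exist. Because \(\mathcal P_\rho\) is bounded below there (\(F\) is closed and proper, and \(D_{\phi_i}\ge0\)), telescoping the decrease gives \(\sum_k\|w^{k+1}-w^k\|^2<\infty\), so successive differences vanish. Take a subsequence \(z^{k_j}\to z^\star\); then \(z^{k_j+1}\to z^\star\) as well. If strong convexity is only available in \(w\) and not in \(\delta\), I will apply the argument to subsequences of \((z^{k_j},z^{k_j+1})\) using compactness. The optimality condition of the subproblem reads
\[
\nabla H(z^{k_j})\in\partial G(z^{k_j+1}).
\]
Since \(\nabla H\) is continuous and the graph of the subdifferential of a closed convex function is closed, letting \(j\to\infty\) gives \(\nabla H(z^\star)\in\partial G(z^\star)\).

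The main obstacle is that closedness step. Graph-closedness of \(\partial G\) needs \(G(z^{k_j+1})\to G(z^\star)\), and \(G\) contains the extended-valued \(\phi_i\) and \(\phi_i^*\), so it is only lower semicontinuous. I plan to handle this with the standard trick of evaluating the subgradient inequality at \(z^\star\) and using the monotone, bounded sequence \(\mathcal P_\rho(z^k)\) to force \(\limsup_j G(z^{k_j+1})\le G(z^\star)\). Failing that, I will appeal to Assumption~\ref{ass:conjugate_regular} (finiteness of \(\phi_i^*\) on the relevant domain) and to the polyhedral, quadratic structure, which makes \(\phi_i\) continuous relative to \(\X_i\). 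A secondary subtlety is that the DC split is in \(w\) rather than in \(z\). I will handle it by the chain rule for affine maps under the relative-interior qualification of Assumption~\ref{ass:conjugate_regular}, which lets me translate stationarity in \((x,\lambda)\) back to \((x,\delta)\).
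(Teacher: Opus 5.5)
Your architecture is the paper's. Its proof only notes that $\widehat D_i^k$ majorizes $D_{\phi_i}$ and is tight at $(x_i^k,\lambda_i^k)$, so Algorithm~\ref{alg:fy_sca} with fixed $\rho$ is a DCA/convex--concave iteration for $G-H$, and then cites the Pham Dinh--Le Thi descent-and-closedness theory; your $G$, $H$, descent chain and limit passage in $\nabla H(z^k)\in\partial G(z^{k+1})$ spell that citation out. One step is false, though. $o_i(x_i,\lambda_i)=\frac14\|x_i-\lambda_i\|^2$ is \emph{not} strongly convex in $(x_i,\lambda_i)$, since it is constant along directions $(v,v)$. The linearization gap you write, $\frac14\|(x_i-\lambda_i)-(x_i^k-\lambda_i^k)\|^2$, controls only the increment of $x_i-\lambda_i$, not of $w_i$. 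Sufficient decrease in $w$ must therefore come from $h_i$ (or $F$). That essentially requires $Q_i\succ0$ on the affine hull of $\X_i$, which the paper does not assume: it allows $Q_i\in\Spsd^{n_i}$ and even $Q_i=0$. Transferring $\|w^{k+1}-w^k\|\to0$ to $z=(x,\delta)$ also needs $\delta\mapsto\lambda(\delta)$ to be injective. Without that, your fallback via subsequences of $(z^{k_j},z^{k_j+1})$ proves criticality only of $\lim_j z^{k_j+1}$, unless you add a value-matching step.

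None of this machinery is needed. With $y^k=\nabla H(z^k)\in\partial G(z^{k+1})$ you have $G^*(y^k)=\langle y^k,z^{k+1}\rangle-G(z^{k+1})$, and a direct computation gives, for $k\ge1$,
\[
0\le G(z^k)+G^*(y^k)-\langle y^k,z^k\rangle
=\mathcal P_\rho(z^k)-\mathcal P_\rho(z^{k+1})-\bigl[H(z^{k+1})-H(z^k)-\langle y^k,z^{k+1}-z^k\rangle\bigr]
\le\mathcal P_\rho(z^k)-\mathcal P_\rho(z^{k+1}).
\]
The right-hand side tends to zero because $\mathcal P_\rho(z^k)$ is nonincreasing and bounded below on the compact level set. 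Along $z^{k_j}\to z^\star$ you have $y^{k_j}\to\nabla H(z^\star)$. Lower semicontinuity of $(z,y)\mapsto G(z)+G^*(y)-\langle y,z\rangle$ then forces a zero Fenchel--Young gap of $G$ at $(z^\star,\nabla H(z^\star))$, i.e.\ $\nabla H(z^\star)\in\partial G(z^\star)$. This is the classical Tao--An argument behind the paper's citation, and it needs no strong convexity, no vanishing successive differences, and no injectivity.

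Your announced ``main obstacle'' also disappears. For proper closed convex $G$, pass to the limit in $G(u)\ge G(z^{k_j+1})+\langle y^{k_j},u-z^{k_j+1}\rangle$ using only $\liminf_j G(z^{k_j+1})\ge G(\lim_j z^{k_j+1})$; this already shows that $\partial G$ has closed graph. Since your $G$ and $H$ are defined directly on $(x,\delta)$ and criticality is stated for $\partial G$ itself, no chain rule or relative-interior qualification is involved.
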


\begin{proof}
The majorized gap \(\widehat D_i^k\) is a global upper bound of \(D_{\phi_i}\) and is tight at \((x_i^k,\lambda_i^k)\). Therefore the SCA step is an instance of the convex-concave procedure or DC algorithm. Under compactness and regularity, the standard descent and closedness arguments for DC programming imply that every accumulation point satisfies the first-order criticality condition of the penalized DC problem \cite{tao1997dc}.
\end{proof}

\begin{remark}[Characterization of solution guarantees]
The proposed SCA method is not a global solver for arbitrary bilevel programs; rather, it generates a sequence of convex approximations to a penalized exact reformulation. Under appropriate penalty updates and assumptions, the method drives the FY residual toward zero, converging to a stationary point of the penalized problem. 
\end{remark}

\subsubsection{Computational complexity discussion}
\label{subsec:complexity}

A KKT/MPEC reformulation introduces stationarity, dual-feasibility, and complementarity conditions for each follower problem. As in mixed-integer programming formulations, the number of binary variables scales with the number of follower inequality constraints. In multi-period energy hub models, this number may reach hundreds or thousands, resulting in large-scale mixed-integer linear programs (MILPs) that are difficult to solve due to their inherent NP-hardness, even before distribution-network constraints are incorporated.
This observation is consistent with complexity results for multi-period tariff optimization and bilevel DR models \cite{kovacs2018complexity,kovacs2019bilevel}. In contrast, the FY-SCA formulation avoids binary complementarity. Its continuous convex subproblems may still be large, especially if \(\phi_i^*\) is represented through conic or dual epigraphs; however, the computational bottleneck is shifted from mixed-integer combinatorics to convex optimization. This is attractive when the price dimension is low or structured, such as one price adder per hub and time, while the follower model contains many device-level constraints.

\section{Distribution-Network Model and Benchmark Design}
\label{sec:congestion_specialization}
Sections~\ref{sec:preliminaries}--\ref{sec:fy_reformulation} develop the more generic machinery for dynamic pricing. We now tailor it to the radial-feeder DSO--hub congestion-pricing application used in the numerical studies. Fig.~\ref{fig:section_iv_compact_system} summarizes the process: the DSO broadcasts a price adder, the hubs solve local problems, and their net withdrawals determine the monitored feeder flows and the congestion status.

\begin{figure}[!ht]
\centering
\resizebox{\columnwidth}{!}{%
\begin{tikzpicture}[
    x=1cm,y=1cm,
    font=\scriptsize,
    >=Latex,
    box/.style={
        draw=black!78,
        rounded corners=1.2pt,
        align=center,
        fill=black!2,
        inner sep=2pt,
        line width=0.42pt
    },
    dso/.style={box,minimum width=35mm,minimum height=9mm},
    hub/.style={box,minimum width=17mm,minimum height=8.5mm},
    bus/.style={circle,fill=black,inner sep=1.25pt},
    feeder/.style={line width=0.70pt,black!88},
    phy/.style={
        {Latex[length=1.2mm,width=0.9mm]}-{Latex[length=1.2mm,width=0.9mm]},
        line width=0.50pt,
        black!75
    },
    infoline/.style={
        line width=0.52pt,
        blue!70!black
    },
    infoarrow/.style={
        -{Latex[length=1.25mm,width=0.95mm]},
        line width=0.52pt,
        blue!70!black
    }
]

\def\yDSO{3.10}
\def\yFeeder{2.22}
\def\yHub{1.28}
\def\yInfo{0.42}

\def\xOne{-2.40}
\def\xTwo{-0.55}
\def\xHubDots{1.00}
\def\xFeederDots{1.00}
\def\xI{2.40}

\def\xInfoLeft{-3.55}

\node[dso] (dso) at (0,\yDSO) {\textbf{DSO}\\[-0.25ex]
{\tiny Forecasts, $H_{\ell i}$, limits, tariffs}\\[-0.25ex]
{\tiny compute common $\delta_t$}};

\node[bus] (b1) at (\xOne,\yFeeder) {};
\node[bus] (b2) at (\xTwo,\yFeeder) {};
\node[bus] (bi) at (\xI,\yFeeder) {};

\draw[feeder] (b1.center) -- (bi.center);
\draw[feeder] (0,\yDSO-0.48) -- (0,\yFeeder);

\node[above=1.2pt,font=\tiny] at (\xOne,\yFeeder) {1};
\node[above=1.2pt,font=\tiny] at (\xTwo,\yFeeder) {2};
\node[above=1.2pt,font=\tiny] at (\xI,\yFeeder) {$i$};

\node[fill=white,inner sep=0.35pt] at (\xFeederDots,\yFeeder) {$\cdots$};

\node[hub] (h1) at (\xOne,\yHub) {Hub 1\\[-0.25ex]$x_1\in\X_1$};
\node[hub] (h2) at (\xTwo,\yHub) {Hub 2\\[-0.25ex]$x_2\in\X_2$};
\node[hub] (hi) at (\xI,\yHub) {Hub $i$\\[-0.25ex]$x_i\in\X_i$};

\node at (\xHubDots,\yHub) {$\cdots$};

\draw[phy] (h1.north) -- node[left=0.45mm] {$w_{1t}$} (b1.south);
\draw[phy] (h2.north) -- node[right=0.45mm] {$w_{2t}$} (b2.south);
\draw[phy] (hi.north) -- node[right=0.45mm] {$w_{it}$} (bi.south);





\coordinate (dsoOut) at (dso.west);

\draw[infoline] (dsoOut) -- (\xInfoLeft,\yDSO) -- (\xInfoLeft,\yInfo) -- (0.55,\yInfo);
\draw[infoline] (1.45,\yInfo) -- (\xI,\yInfo);
\node[fill=white,inner sep=0.35pt,text=blue!70!black] at (1.00,\yInfo) {$\cdots$};

\draw[infoarrow] (\xOne,\yInfo) -- (h1.south);
\draw[infoarrow] (\xTwo,\yInfo) -- (h2.south);
\draw[infoarrow] (\xI,\yInfo)   -- (hi.south);

\node[text=blue!70!black,font=\tiny,anchor=south]
at (-1.10,\yInfo+0.03) {Broadcast $\delta_t$};
\draw[infoarrow] (bi.east) -- ++(0.75,0) -- ++(0,\yDSO-\yFeeder) -- (dso.east);

\node[text=blue!70!black,font=\tiny,anchor=east]
at (\xI+0.68,2.89) {Network state};
\draw[feeder] (-3.45,0.08) -- (-3.12,0.08);
\node[font=\tiny,anchor=west] at (-3.05,0.08) {Power flow line};

\draw[infoline] (-1.32,0.08) -- (-0.99,0.08);
\node[font=\tiny,text=blue!70!black,anchor=west] at (-0.92,0.08) {Information flow line};
\end{tikzpicture}}
\caption{Compact system model of the distribution-network application. The DSO computes the common congestion-price adder \(\delta_t\); hubs solve local problems over \(\X_i\); the resulting net withdrawals \(w_{it}\) determine monitored feeder flows, and network-state information is fed back to the DSO.}
\label{fig:section_iv_compact_system}
\vspace{-0.8ex}
\end{figure}

\subsection{Convex Network and Congestion Model}
\label{subsec:network_model}

For hub \(i\) and period \(t\), net withdrawal is
\begin{equation}
    w_{it}=m^{\rm in}_{it}-m^{\rm out}_{it}.
    \label{eq:net_withdrawal}
\end{equation}
Here \(m^{\rm in}_{it}\) and \(m^{\rm out}_{it}\) are the hub's grid import and export, respectively, so positive \(w_{it}\) denotes net withdrawal from the feeder. The numerical study uses a PTDF-type positive-sequence active-power sensitivity abstraction,
\begin{equation}
    f_{\ell t}=f^{\rm bg}_{\ell t}-\sum_{i\in\Hubs}H_{\ell i}w_{it},
    \qquad \ell\in\Lines,
    \label{eq:ptdf_flow}
\end{equation}
and \(S_t=S_t^{\rm bg}+\sum_i w_{it}\) for the substation exchange. In \eqref{eq:ptdf_flow}, \(f_{\ell t}\) and \(f_{\ell t}^{\rm bg}\) denote the monitored and background active-power flow on line \(\ell\), \(H_{\ell i}\) is the corresponding PTDF-type sensitivity to hub \(i\), and \(S_t^{\rm bg}\) is the background substation exchange. This is our modeling choice. Distribution-pricing studies have likewise used PTDF-based active-power congestion models or linearized distribution OPF models to obtain tractable congestion prices \cite{li2014dlmp_ev,huang2015dlmp_qp,yuan2018linearized}. The choice is not imposed by the FY reformulation or by polyhedral follower constraints: any convex upper-level network representation, including linearized DistFlow or an SOCP/SDP relaxation, may replace \eqref{eq:ptdf_flow} while preserving convex SCA subproblems \cite{farivar2013branchflow,lavaei2012zero}. 

Line and substation overloads are represented by
\begin{subequations}
\begin{align}
 o^{\rm line}_{\ell t}&\ge f_{\ell t}-\bar f_\ell,&
 o^{\rm line}_{\ell t}&\ge-f_{\ell t}-\bar f_\ell,&
 o^{\rm line}_{\ell t}&\ge0,\label{eq:line_overload}\\
 o^{\rm sub}_{t}&\ge S_t-\bar S^+,&
 o^{\rm sub}_{t}&\ge-S_t-\bar S^-,&
 o^{\rm sub}_{t}&\ge0.\label{eq:sub_overload}
\end{align}
\label{eq:overload_epigraphs}
\end{subequations}
The variables \(o^{\rm line}_{\ell t}\) and \(o^{\rm sub}_{t}\) are nonnegative epigraph residuals for violations of line limit \(\bar f_\ell\) and substation import/export limits \(\bar S^+,\bar S^-\). The DSO congestion term is
\begin{equation}
 J^{\rm cong}(x)=\alpha^{\rm line}\!\sum_{\ell,t}\Gamma(o^{\rm line}_{\ell t})
 +\alpha^{\rm sub}\!\sum_t\Gamma(o^{\rm sub}_{t}),
 \label{eq:congestion_objective}
\end{equation}
where \(\Gamma\) is convex and nondecreasing. The coefficients \(\alpha^{\rm line}\) and \(\alpha^{\rm sub}\) weight line and substation violations in the DSO objective. The reported physical metrics use the same epigraph residuals without objective weights:
\begin{align}
 C^{\rm line}&=\sum_{\ell,t}o^{\rm line}_{\ell t},\qquad
 C^{\rm sub}=\sum_t o^{\rm sub}_{t},\notag\\[-0.2ex]
 C^{\rm total}&=C^{\rm line}+C^{\rm sub}.
 \label{eq:congestion_metrics}
\end{align}

\subsection{Participant Response and Dynamic Price Design}
\label{subsec:energy_hub_model}

The term \emph{hub} denotes a controllable boundary node, not a restriction on participation. It may represent a VPP, DER aggregator, commercial site, EV fleet, or portfolio of conventional customers. Inelastic customers enter \(D^{\rm fix}_{it}\) or \(f^{\rm bg}_{\ell t}\); price-responsive conventional customers are simplified hubs containing only import and flexible-load variables.

For each \((i,t)\), let
\begin{equation}
 x_{it}=(m^{\rm in}_{it},m^{\rm out}_{it},u_{it},p^{\rm ch}_{it},p^{\rm dis}_{it},e_{it},a_{it},c^{\rm pv}_{it},g_{it}).
 \label{eq:xit_order}
\end{equation}
Thus \(x_{it}\) collects import/export, flexible demand \(u_{it}\), battery charging/discharging and energy state \((p^{\rm ch}_{it},p^{\rm dis}_{it},e_{it})\), the absolute flexible-load deviation \(a_{it}\), PV curtailment \(c^{\rm pv}_{it}\), and dispatchable generation \(g_{it}\). The power balance is
\begin{equation}
\begin{aligned}
&m^{\rm in}_{it}-m^{\rm out}_{it}+p^{\rm dis}_{it}-p^{\rm ch}_{it}+g_{it}-u_{it}-c^{\rm pv}_{it}\\
&\hspace{27mm}=D^{\rm fix}_{it}-PV^{\rm avail}_{it}.
\end{aligned}
\label{eq:hub_power_balance}
\end{equation}
Here \(D^{\rm fix}_{it}\) is fixed demand and \(PV^{\rm avail}_{it}\) is available PV generation before curtailment. Flexible demand is bounded and energy conserving,
\begin{subequations}
\begin{align}
(1-\alpha_i^{\rm down})u^0_{it}&\le u_{it}\le(1+\alpha_i^{\rm up})u^0_{it},\label{eq:flex_bounds}\\
\sum_tu_{it}&=\sum_tu^0_{it},\label{eq:flex_energy}\\
a_{it}&\ge \pm(u_{it}-u^0_{it}),\label{eq:flex_abs}
\end{align}
\label{eq:flex_constraints}
\end{subequations}
In \eqref{eq:flex_constraints}, \(u^0_{it}\) is the preferred flexible-load baseline and \(\alpha_i^{\rm down},\alpha_i^{\rm up}\) define downward/upward adjustment limits, and storage satisfies
\begin{subequations}
\begin{align}
e_{it}&=e_{i,t-1}+\eta_i^{\rm ch}p^{\rm ch}_{it}\dd-(\eta_i^{\rm dis})^{-1}p^{\rm dis}_{it}\dd,\label{eq:battery_dyn}\\
\underline e_i&\le e_{it}\le\bar e_i,\quad
0\le p^{\rm ch}_{it}\le\bar p_i^{\rm ch},\quad
0\le p^{\rm dis}_{it}\le\bar p_i^{\rm dis},\label{eq:battery_bounds}\\
p^{\rm ch}_{it}+p^{\rm dis}_{it}&\le\bar p^{\rm bat}_{it}.
\label{eq:battery_antiloop}
\end{align}
\label{eq:battery_constraints}
\end{subequations}
In \eqref{eq:battery_constraints}, \(\eta_i^{\rm ch}\) and \(\eta_i^{\rm dis}\) are charging/discharging efficiencies and the barred quantities are device limits. Together with import/export, curtailment, and dispatchable-generation bounds, these constraints define \(\X_i\).

The effective prices are
\begin{equation}
 \pi^{\rm buy}_{it}=\lambda_t^{\rm buy}+\tau_i^{\rm base}+\delta_{it},\qquad
 \pi^{\rm sell}_{it}=\lambda_t^{\rm sell}+\chi^{\rm sell}\delta_{it}.
 \label{eq:effective_prices}
\end{equation}
Here \(\lambda_t^{\rm buy}\) and \(\lambda_t^{\rm sell}\) are reference buy/sell prices, \(\tau_i^{\rm base}\) is the hub-specific base tariff, and \(\chi^{\rm sell}\) passes the congestion adder through to export credit. Thus \(\chi^{\rm sell}=0\) gives an import-only surcharge, whereas \(\chi^{\rm sell}=1\) gives symmetric net-withdrawal pricing. The IEEE-feeder experiments use \(\chi^{\rm sell}=1\), as in the implemented market design. 

Let \(M_i^{\rm in}\) and \(M_i^{\rm out}\) be selection matrices extracting imports and exports from \(x_i\). The price-induced linear coefficient in \eqref{eq:follower_qp} is $\lambda_i = \col(\lambda_{it}:t\in\mathcal{T})$, with \(\lambda_{it}(\delta_{it})\) denoting the period-\(t\) affine price coefficient applied inside the hub objective, where
\begin{equation}
\begin{aligned}
    \lambda_{it}(\delta_{it})=\dd\big((M_i^{\rm in})^\top(\lambda_t^{\rm buy}+\tau_i^{\rm base}\ones+\delta_{it})\\
    -(M_i^{\rm out})^\top(\lambda_t^{\rm sell}+\chi^{\rm sell}\delta_{it})\big).
    \label{eq:lambda_i_energy}
    \end{aligned}
\end{equation}
This definition keeps the follower objective affine in the DSO decision \(\delta_{it}\). In \eqref{eq:lambda_i_energy}, \(\ones\) denotes a conformable vector of ones and \(\dd\) is the settlement-period length. However, it should also be mentioned that the main experiments impose the common broadcast policy $\delta_{it}=\delta_t$, $\forall i,t$, with caps $-40\le\delta_t\le80$ EUR/MWh. 
\subsection{Dynamic Congestion Price Design}
\label{subsec:price_design}
The upper-level objective used in the experiments is
\begin{equation}
    F(x,\delta)=J^{\rm cong}(x)+J^{\rm op}(x)+R(\delta),
    \label{eq:application_F}
\end{equation}
where \(J^{\rm cong}(x)\) penalizes network congestion, \(J^{\rm op}(x)\) captures any convex operating-cost or welfare term, and \(R(\delta)\) regularizes the dynamic congestion-price adders. In the DSO case, \(J^{\rm op}\) may include losses, curtailment, balancing costs, or welfare losses; otherwise, it can be set to zero without affecting the proposed algorithm. Price regularization is
\begin{equation}
\begin{aligned}
    R(\delta)=&\;\beta^{\rm level}\sum_{t\in\Time}\sum_{i\in\Hubs}\delta_{it}^2
    +\beta^{\rm fair}\sum_{t\in\Time}\sum_{i\in\Hubs}(\delta_{it}-\bar\delta_t)^2,
\end{aligned}
\label{eq:price_regularization}
\end{equation}
with \(\bar\delta_t=|\Hubs|^{-1}\sum_i\delta_{it}\). The first term discourages unnecessarily large price adders, and the second discourages excessive cross-hub price dispersion. After the DSO computes $\lambda_{i}$, each hub solves its convex follower problem and reports $w_{it}$ and settlement quantities.

The final application-level penalized SCA problem at iteration \(k\) is 
\begin{equation}
\begin{aligned}
\min_{x,\delta,o}\quad
& F(x,\delta)+\rho_k\sum_{i\in\Hubs}\widehat D_i^k(x_i,\lambda_i(\delta_i))\\
\text{s.t.}\quad
&\delta\in\D,\quad
\eqref{eq:ptdf_flow}-\eqref{eq:overload_epigraphs},\quad x_i\in\X_i.
\end{aligned}
\label{eq:application_sca}
\end{equation}

\textit{Implementation and information exchange}:
The proposed framework separates offline price design from real-time price implementation. In the numerical study, the DSO is assumed to have forecasted demand, renewable availability, network states, and a calibrated aggregate model, or oracle, of each hub's convex price response. This information is used only to compute the dynamic congestion-price adders by solving \eqref{eq:application_sca}; it does not imply direct dispatch of private hub devices. During implementation, the DSO broadcasts the computed adder \(\delta_{it}\), or equivalently the induced price coefficient \(\lambda_{it}(\delta_{it})\), to each hub. Each hub then solves \eqref{eq:follower_qp} locally and reports only the quantities required for settlement and network monitoring, such as net withdrawal \(w_{it}\) and delivered flexibility. Thus, dynamic prices act as coordination signals rather than direct dispatch commands.
Load-response metrics include
\begin{equation}
E^{\rm flex,shift}=\sum_{i,t}|u_{it}-u^0_{it}|\dd,
\quad
E^{\rm bat,thr}=\sum_{i,t}(p^{\rm ch}_{it}+p^{\rm dis}_{it})\dd,
\label{eq:flex_metrics}
\end{equation}
where \(E^{\rm flex,shift}\) measures the total flexible-load adjustment relative to the preferred baseline \(u^0_{it}\), and \(E^{\rm bat,thr}\) measures total battery cycling. The lower-level optimality certificate for hub \(i\) is
$r_i
=
D_{\phi_i}\!\left(x_i,-\lambda_i(\delta_i)\right)$,
with \(r_i=0\) indicating the consistency between the broadcast price and the hub's convex response model.

\subsection{IEEE Feeder Construction and Evaluation Protocol}
\label{subsec:benchmark_protocol}


The physical topologies and static spot-load maps are taken from the standard IEEE 13-node and 34-node radial distribution test feeders \cite{kersting2001radial}. The study preserves feeder connectivity and active-load geography but uses the convex sensitivity abstraction in \eqref{eq:ptdf_flow}. Eight loaded buses are promoted to active aggregators. For each original static load, 85\% is assigned to the electrically nearest active hub and 15\% remains at its original bus as passive background demand. Flexible demand, PV, batteries, and modest dispatchable DER are then added at the hubs using transparent fractions of assigned load. 
Thus the benchmark topology and base load are standard IEEE data, whereas the time profiles, active-resource portfolios, and stress ratings are documented study augmentations. Fig.~\ref{fig:ieee13_schematic} shows a high-level schematic of the augmented IEEE 13-node case, while {\color{black}the complete augmentation rules and their details are summarized in Table~C.1.}

\begin{figure}[!t]
\centering
\resizebox{0.98\columnwidth}{!}{%
\begin{tikzpicture}[
  x=0.950cm,y=0.420cm,>=Latex,line cap=round,line join=round,
  feeder/.style={line width=0.45pt,draw=black!85},trunk/.style={line width=0.75pt,draw=black},
  bus/.style={circle,draw=black,fill=black,inner sep=0pt,minimum size=1.8pt},
  source/.style={draw=red!80!black,line width=0.8pt,fill=red!6,minimum width=2.3em,minimum height=1.5em,align=center,font=\scriptsize\bfseries}
]
\coordinate (bDSO) at (0,-1);\coordinate (b650) at (1,-1);\coordinate (bRG60) at (2,-1);\coordinate (b632) at (3,-1);
\coordinate (b633) at (4,-2.5);\coordinate (b645) at (4,-1.5);\coordinate (b670) at (4,1);
\coordinate (b634) at (5,-2.5);\coordinate (b646) at (5,-1.5);\coordinate (b671) at (5,1);
\coordinate (b680) at (6,-.5);\coordinate (b684) at (6,1);\coordinate (b692) at (6,2.5);
\coordinate (b611) at (7,.5);\coordinate (b652) at (7,1.5);\coordinate (b675) at (7,2.5);
\node[source,anchor=east] (source) at (-1.15,-1) {Grid\\source};\draw[trunk,-Latex] (source.east)--(bDSO);
\foreach \a/\b in {bDSO/b650,b650/bRG60}{\draw[trunk] (\a)--(\b);}
\foreach \a/\b in {bRG60/b632,b632/b670,b632/b633,b632/b645,b670/b671,b633/b634,b645/b646,b671/b680,b671/b684,b671/b692,b684/b611,b684/b652,b692/b675}{\draw[feeder] (\a)--(\b);}
\foreach \n/\lab/\x/\y/\anc in {bDSO/DSO/0/-.84/south,b650/650/1/-.84/south,bRG60/RG60/2/-.84/south,b632/632/3/-.84/south,b633/633/4/-2.34/south,b670/670/4/.84/north,b680/680/6/-.34/north,b684/684/6/.84/north}{\node[bus] at (\n){};\node[font=\scriptsize,anchor=\anc] at (\x,\y){\lab};}
\foreach \n/\lab/\x/\y/\anc in {b645/645/4/-1.34/south,b634/634/5/-2.34/south,b646/646/5/-1.34/south,b671/671/5/.84/north,b692/692/6/2.34/north,b611/611/7/.34/north,b652/652/7/1.34/north,b675/675/7/2.34/north}{\node[bus,draw=orange!90!black,fill=orange!85,minimum size=2.8pt] at (\n){};\node[font=\scriptsize,anchor=\anc] at (\x,\y){\lab};}
\end{tikzpicture}}
\vspace{-0.7ex}
\caption{Equivalent schematic of the augmented IEEE 13-node case. Orange buses host the eight controllable aggregators; black buses and feeder connectivity follow the benchmark topology. }
\label{fig:ieee13_schematic}
\end{figure}

Both cases use \(T=24\), \(\dd=1\) h, flexible-load ranges up to \(\pm35\%\), symmetric buy/sell adders, and one common \(\delta_t\) across hubs. The four benchmarks are: Base (no dynamic price), Central (full-information physical lower bound), KKT/MILP (the conventional linear-follower complementarity benchmark), and Quad-FY (the proposed model). KKT/MILP is subject to a 3600-s limit; when that limit is reached, its row is an incumbent rather than a proven optimum. Central is not an implementable market mechanism, and it only acts as a utopian approach that has full knowledge and control of the hubs. Calculations were performed on a 64-bit Windows 10 workstation with an Intel Core i9-13900K CPU and 16 GB RAM.

\section{Numerical Results}
\label{sec:numerical_results}

\subsection{Congestion Mitigation and Temporal Performance}

Table~\ref{tab:real_benchmark_results} consolidates the physical, economic, and computational outcomes. Quad-FY reduces total congestion from 16.79 to 0.522 MW on IEEE 13 and from 35.58 to 1.263 MW on IEEE 34, corresponding to 96.89\% and 96.45\% reductions. 
Central eliminates the overload in both cases, showing that the remaining FY residual is not an irreducible network minimum. It instead reflects the restricted common-price instrument, decentralized quadratic responses, and local outer optimization.

\begin{table*}[!t]
\centering
\caption{IEEE Feeder Benchmark Outcomes and Price-Response Diagnostics}
\label{tab:real_benchmark_results}
\footnotesize
\setlength{\tabcolsep}{2.35pt}\renewcommand{\arraystretch}{0.92}
\begin{tabular}{llrrrrrrrrrL{0.16\textwidth}}
\toprule
Feeder & Method & Cong. & Red. & FY Iter. & Time & $|\delta|$ & Ramp & Flex & Batt. & Import & Certificate/status\\
 & & (MW) & (\%) & & (s) & \multicolumn{2}{c}{(EUR/MWh)} & \multicolumn{3}{c}{(MWh)} & \\
\midrule
IEEE 13 & Base & 16.786 & 0.00 & 0 & 0.28 & 0 & 0 & 2.971 & 3.926 & 61.70 & LP/QP optimal\\
 & Central & 0 & 100 & 0 & 0.74 & 0 & 0 & 3.203 & 3.978 & 57.51 & physical lower bound\\
 & KKT/MILP & 2.614 & 84.43 & -- & 3606 & 33.87 & 36.81 & 3.408 & 3.926 & 54.76 & 3600-s limit; incumbent\\
 & Quad-FY & 0.522 & 96.89 & 5 & 732.1 & 45.69 & 32.40 & 3.951 & 3.926 & 54.00 & $r_{\max}=3.41\!\times\!10^{-13}$\\
\midrule
IEEE 34 & Base & 35.579 & 0.00 & 0 & 0.30 & 0 & 0 & 1.452 & 2.145 & 30.20 & LP/QP optimal\\
 & Central & 0 & 100 & 0 & 1.99 & 0 & 0 & 2.001 & 2.298 & 29.11 & physical lower bound\\
 & KKT/MILP & 4.377 & 87.70 & -- & 3609 & 50.81 & 45.25 & 2.258 & 2.145 & 26.77 & 3600-s limit; incumbent\\
 & Quad-FY & 1.263 & 96.45 & 7 & 882.5 & 54.51 & 29.17 & 2.204 & 2.145 & 26.77 & $r_{\max}=1.71\!\times\!10^{-13}$\\
\bottomrule
\end{tabular}
\vspace{0.4ex}
\begin{minipage}{1.5\columnwidth}
\footnotesize
\emph{Note:} The iteration count is reported only for Quad-FY and denotes outer FY-SCA decomposition iterations. The KKT/MILP benchmark is solved as a single mixed-integer optimization problem.
\end{minipage}
\end{table*}

The 34-node baseline is more severe, yet FY retains essentially the same percentage reduction. By contrast, KKT/MILP leaves 2.614 and 4.377 MW and reaches the one-hour limit in both cases. The measured KKT/FY runtime ratios are at least 4.93 and 4.09, respectively; because KKT did not close its solve, these are lower bounds on the time required for a proven KKT solution.  Increasing the processed network from 16 nodes/18 lines to 56 nodes/60 lines changes the FY outer count only from \(5\) to \(7\) and the runtime from \(732.1\) to \(882.5\) s, whereas both KKT/MILP runs reach the 3600-s limit. This supports the expected structural advantage.


\subsection{Response Mechanism and Economic Interpretation}

Congestion relief is not produced by price magnitude alone. On IEEE 13, FY increases flexible-load shifting from 2.971 to 3.951 MWh and reduces total import by 12.48\%, while battery throughput changes by less than 0.01\%. On IEEE 34, flexible shifting rises by 51.79\% and import falls by 11.36\%, again with essentially unchanged throughput. Renewable utilization remains 100\% throughout. Thus the dominant mechanism is temporal redistribution of net withdrawal---through flexible demand and the timing of existing storage/local generation---rather than renewable curtailment or additional battery cycling.

The IEEE 34 KKT and FY rows have the same aggregate import, 26.77 MWh, but FY leaves 71.1\% less congestion than the KKT incumbent. Likewise, Central eliminates IEEE 13 congestion while importing more energy than FY. These comparisons demonstrate that congestion depends on the temporal and electrical placement of withdrawals, not merely their horizon total. The common price has zero cross-hub spread, so spatial differentiation is achieved only through heterogeneous local assets, tariffs, and sensitivities; locational adders could further close the centralized gap but would change the market instrument being tested.

The FY residuals of \(3.41\times10^{-13}\) and \(1.71\times10^{-13}\) verify consistency of the reported schedules with the corresponding fixed-price convex follower models. The outer algorithm terminates when no improving local candidate is found, consistent with the stationary-point guarantees in Section~\ref{subsec:convergence}.



Additionally, one should note that the computational comparison here is structural, not a claim that FY must dominate every KKT implementation. To clarify, let \(n_x\) denote the follower-variable dimension, \(m_I\) and \(m_E\) the numbers of follower inequalities and equalities, and \(p\) the leader-price dimension. A conventional KKT/MPEC route augments the primal model with \(m_I+m_E\) dual variables and \(m_I\) complementarity relations; a disjunctive reformulation can additionally require up to \(m_I\) binary constructs. The FY route retains the primal response and the \(p\)-dimensional price, replacing constraint-wise complementarity with a conjugate/epigraph evaluation and a scalar residual per follower. {\color{black}Table~C.2 summarizes what is and is not, certified by the two routes.

Finally, four extra synthetic stress tests are reported in the online appendix. Their main message is that FY dynamic price coordination can substantially reduce overloads and again acts better than KKT; however, residual congestion becomes physical when network limits and available flexibility are binding.}



\section{Conclusion}
\label{sec:conclusion}

This paper developed an FY convex-analytic framework for bilevel dynamic congestion pricing in active distribution networks. By embedding follower feasibility into an extended convex objective, the lower-level price response is represented through a nonnegative FY optimality gap, avoiding explicit KKT complementarity constraints and big-\(M\) disjunctions. The remaining bilinear price-response term is handled by a DC decomposition and an SCA algorithm, yielding continuous convex subproblems with residual-based certificates of lower-level consistency. The framework is tailored to DSO coordination of price-responsive hubs/VPPs with flexible demand, storage, renewable curtailment, dispatchable generation, and PTDF-type congestion monitoring. In the IEEE 13- and 34-node studies, the standard feeder topology and load geography are preserved while controllable aggregators and dynamic price adders are introduced transparently. The proposed FY approach reduced total overload by \(96.89\%\) and \(96.45\%\), maintained FY residuals at numerical precision, and produced lower residual congestion within the imposed computational budget in cases where the complementarity-based KKT/MILP benchmark returned time-limited incumbents. The results show that congestion relief is driven primarily by the timing and network location of price-induced withdrawals, not by aggregate import reduction or curtailment alone. Future work will combine the reformulation with unbalanced convex AC models, uncertainty-aware response learning~\cite{cianchi2026learning}, distributed SCA, and settlement designs with revenue-neutrality and participant-surplus constraints.




\ifextendedappendices

\appendix[Supplementary Analyses and Stress Tests]

\section{Appendix}
\label{app:main}






\counterwithin{equation}{subsection}
\counterwithin{table}{subsection}
\counterwithin{figure}{subsection}

\renewcommand{\theequation}{\Alph{subsection}.\arabic{equation}}
\renewcommand{\thetable}{\Alph{subsection}.\arabic{table}}
\renewcommand{\thefigure}{\Alph{subsection}.\arabic{figure}}

\makeatletter
\@ifpackageloaded{hyperref}{%
  \renewcommand{\theHequation}{app.\Alph{subsection}.\arabic{equation}}%
  \renewcommand{\theHtable}{app.\Alph{subsection}.\arabic{table}}%
  \renewcommand{\theHfigure}{app.\Alph{subsection}.\arabic{figure}}%
}{}
\makeatother




This appendix collects material supporting the theoretical and numerical analysis in the main paper. It provides proof details, feeder-processing and reproducibility information, additional structural comparisons, and four synthetic stress tests designed to isolate the effects of congestion severity, renewable/storage arbitrage, and synchronized EV charging.

\subsection{Proof Details for the Fenchel--Young Reformulation}
\label{app:proofs}
Let
\(\phi=\psi+I_{\X}\), where \(\psi\) is proper, closed, and convex and \(\X\) is nonempty, closed, and convex.  For a fixed price coefficient \(\lambda\), the follower problem is
\begin{equation}
\min_{z\in\R^n}\{\phi(z)+\lambda^\top z\}.
\end{equation}
The point \(x\) is optimal if and only if
\begin{equation}
0\in \partial\phi(x)+\lambda,
\quad\text{or equivalently}\quad -\lambda\in\partial\phi(x).
\end{equation}
By the equality case of the FY inequality,
\begin{equation}
    -\lambda\in\partial\phi(x)
    \quad\Longleftrightarrow\quad
    \phi(x)+\phi^*(-\lambda)+\lambda^\top x=0.
\end{equation}
The left-hand side is nonnegative for all \((x,\lambda)\). Hence the lower-level feasible-response condition can be represented without explicit dual variables or complementarity products by the zero level set of the FY gap.

For the SCA step, write
\begin{equation}
D_\phi(x,-\lambda)=h(x,\lambda)-o(x,\lambda),
\end{equation}
where
\begin{align}
 h(x,\lambda)&=\phi(x)+\phi^*(-\lambda)+\frac14\|x+\lambda\|^2,\\
 o(x,\lambda)&=\frac14\|x-\lambda\|^2 .
\end{align}
Both \(h\) and \(o\) are convex, and the identity follows from
\(\lambda^\top x=\frac14\|x+\lambda\|^2-\frac14\|x-\lambda\|^2\).  Because \(o\) is convex,
\begin{equation}
    o(x,\lambda)\ge o(x^k,\lambda^k)+\langle \nabla o(x^k,\lambda^k),\col(x-x^k,\lambda-\lambda^k)\rangle .
\end{equation}
Therefore \(-o\) is upper-bounded by the negative of this affine minorant, yielding the majorized gap used in Algorithm 1.  This is the sign-sensitive step: using the opposite sign would minorize rather than majorize the penalized objective and would destroy the standard descent interpretation of the convex--concave procedure.

\subsection{Computational Workflow}
The computational workflow is summarized in Fig.~\ref{fig:fy_sca_pipeline}. 
\begin{figure}[!htb]
\centering
\begin{tikzpicture}[
 font=\scriptsize,
 block/.style={draw,rounded corners,align=center,text width=0.82\columnwidth,minimum height=5.4mm,fill=gray!6,inner sep=1.7pt},
 arr/.style={-{Latex[length=1.45mm]},line width=0.65pt}
]
\node[block] (a) {Affine price-entry follower: \(\phi_i(x_i)+\lambda_i(\delta_i)^\top x_i\)};
\node[block,below=2.5mm of a] (b) {FY gap: \(D_{\phi_i}(x_i,-\lambda_i(\delta_i))\)};
\node[block,below=2.5mm of b] (c) {DC split and affine decomposition: \(h_i-o_i\mapsto\widehat D_i^k\)};
\node[block,below=2.5mm of c] (d) {Solve convex SCA subproblem; evaluate \(r_{\max}^{k+1}\) and \(s^{k+1}\)};
\node[block,below=2.5mm of d] (e) {Stop, or update \(\rho_k\) only when the FY residual stalls};
\draw[arr] (a)--(b); \draw[arr] (b)--(c); \draw[arr] (c)--(d); \draw[arr] (d)--(e);
\draw[arr] (e.east) -- ++(0.25,0) |- (b.east);
\end{tikzpicture}
\vspace{-0.5ex}
\caption{Fenchel--Young SCA workflow. The feedback path increases the penalty only when lower-level consistency ceases to improve.}
\label{fig:fy_sca_pipeline}
\end{figure}

\subsection{IEEE Feeder Processing, Augmentation, and Full Diagnostics}
\label{app:real_feeder_details}
The active-hub sets are
\begin{align*}
\Hubs_{13}&=\{634,645,646,671,675,692,611,652\},\\
\Hubs_{34}&=\{890,844,\mathrm{mid860},\mathrm{mid822},\mathrm{mid836},848,860,830\}.
\end{align*}
These labels are benchmark bus identifiers; ``mid'' denotes retained intermediate nodes in the processed OpenDSS graph. If \(P_b^0\) is the original static real load at bus \(b\), the controllable and passive shares are
\begin{equation}
 P_{b}^{\rm ctrl}=\zeta P_b^0,\qquad P_b^{\rm pass}=(1-\zeta)P_b^0,\qquad \zeta=0.85.
 \label{eq:app_load_split}
\end{equation}
The parameter \(\zeta\) is the controllable-load share. Each controllable share is assigned to the electrically nearest active hub; passive load remains at its original bus and forms \(f^{\rm bg}_{\ell t}\).

Study branch ratings are
\begin{equation}
 \bar f_\ell=\max\{F_\ell^{\min},\kappa^{\rm stress}\max_t|f^{\rm base}_{\ell t}|\},\qquad \kappa^{\rm stress}=0.82,
 \label{eq:app_capacity_rule}
\end{equation}
where \(f^{\rm base}_{\ell t}\) is no-price base flow, \(F_\ell^{\min}\) is a phase-dependent floor, and \(\kappa^{\rm stress}\) is the stress factor. Source and transformer elements use a protected envelope exceeding their base-flow maximum. 

The deterministic feeder load multiplier is
\begin{align}
 \omega_t=\omega^0
 &+a^{\rm m}\exp\!\left[-\frac{(t-t^{\rm m})^2}{2\sigma_{\rm m}^2}\right]\notag\\
 &+a^{\rm e}\exp\!\left[-\frac{(t-t^{\rm e})^2}{2\sigma_{\rm e}^2}\right]\notag\\
 &-a^{\rm d}\exp\!\left[-\frac{(t-t^{\rm d})^2}{2\sigma_{\rm d}^2}\right].
 \label{eq:app_feeder_load_shape}
\end{align}
normalized to unit mean. Here \(\omega^0\) is the base multiplier; \(a^{\rm m},a^{\rm e},a^{\rm d}\) are morning, evening, and midday-dip amplitudes; \(t^{\rm m},t^{\rm e},t^{\rm d}\) locate those components; and \(\sigma_{\rm m},\sigma_{\rm e},\sigma_{\rm d}\) determine their widths. Available PV follows \([\sin(\pi(t-6)/12)]_+^{1.7}\). The exogenous buy price is \(42+28g_t^{\rm e}+8g_t^{\rm m}-16g_t^{\rm pv}\) EUR/MWh, clipped below at 4 EUR/MWh, where the three \(g\)-terms are the normalized evening, morning, and solar shapes. These profiles are study inputs, separate from feeder provenance. Tables~\ref{tab:ieee_case_setup}--\ref{tab:route_scope} detail the IEEE cases' augmentations and computational complexity diagnostics, respectively.
\begin{table*}[!t]
\centering
\caption{IEEE Feeder Construction, Study Augmentations, and Reproducibility Settings}
\label{tab:ieee_case_setup}
\scriptsize
\setlength{\tabcolsep}{2.6pt}
\renewcommand{\arraystretch}{1.02}
\begin{tabular}{L{0.18\textwidth}L{0.38\textwidth}L{0.38\textwidth}}
\toprule
Item & IEEE 13-node case & IEEE 34-node case\\
\midrule

Benchmark feeder
&
Standard IEEE 13-node radial distribution test feeder \cite{kersting2001radial}. The original feeder topology and static spot-load locations are preserved.
&
Standard IEEE 34-node radial distribution test feeder \cite{kersting2001radial}. The original feeder topology and static spot-load locations are preserved.
\\

Nominal voltage
&
\(4.16\) kV.
&
\(24.9\) kV.
\\

Processed network used for pricing
&
\(16\) processed nodes and \(18\) monitored branches after including the source/DSO representation used to construct the PTDF-type active-power sensitivity model.
&
\(56\) processed nodes and \(60\) monitored branches after retaining intermediate nodes required by the processed OpenDSS graph and the PTDF-type active-power sensitivity model.
\\

Preserved data
&
Original radial connectivity, bus identifiers, feeder geography, and static active-power load locations.
&
Original radial connectivity, bus identifiers, feeder geography, and static active-power load locations.
\\

Study augmentations
&
Controllable aggregator hubs, DER portfolios, time-varying load/PV/price profiles, and congestion-study branch limits are added for dynamic-pricing experiments.
&
Same augmentation structure: controllable aggregator hubs, DER portfolios, time-varying load/PV/price profiles, and congestion-study branch limits are added.
\\

Controllable hub buses
&
\(\{634,645,646,671,675,692,611,652\}\).
&
\(\{890,844,\mathrm{mid}860,\mathrm{mid}822,\mathrm{mid}836,848,860,830\}\).
\\

Hub-bus interpretation
&
Each listed bus is treated as a price-responsive aggregator/hub connection point. Hubs are connected to buses, not directly to lines; their net withdrawals affect monitored branch flows through \(H_{\ell i}\).
&
Same interpretation. The labels \(\mathrm{mid}860\), \(\mathrm{mid}822\), and \(\mathrm{mid}836\) denote retained intermediate graph nodes in the processed feeder model.
\\

Hub-selection rule
&
Loaded or electrically influential buses are promoted to hubs so that distinct feeder zones influence the monitored congestion constraints.
&
Loaded or electrically influential buses, including retained intermediate graph nodes, are promoted to hubs so that upstream, mid-feeder, and downstream zones are represented.
\\

Original static load
&
Total original static active-power load is \(3.466\) MW.
&
Total original static active-power load is \(1.769\) MW.
\\

Load assignment to hubs
&
For every original static load \(P_b^0\), \(85\%\) is assigned to the electrically nearest active hub and \(15\%\) remains at bus \(b\) as passive background demand.
&
Same rule: \(85\%\) of each original static load is assigned to the nearest active hub and \(15\%\) remains as passive background demand.
\\

Background demand
&
The passive \(15\%\) load component contributes to \(f_{\ell t}^{\rm bg}\) in the linear sensitivity model and is not optimized by the hub response problem.
&
Same background-demand treatment.
\\

Controllable hub-load notation
&
\(P_i^{\rm ctrl}\) denotes the total active-power load assigned to hub \(i\) before time-profile scaling, i.e., the sum of the \(85\%\) controllable portions assigned to hub \(i\).
&
Same definition of \(P_i^{\rm ctrl}\).
\\

Fixed/flexible split
&
At each hub, \(74\%\) of the assigned profile is fixed demand and \(26\%\) is flexible demand, except every third hub in the listed order, where the split is \(70\%/30\%\).
&
Same fixed/flexible split rule.
\\

Flexible-load bounds
&
Flexible demand can move within \(\pm35\%\) of its preferred profile, except every third hub in the listed order, where the bound is \(\pm25\%\). Total flexible energy is conserved over the horizon.
&
Same flexible-load bounds and energy-conservation rule.
\\

PV capacity
&
Nominal PV capacity at hub \(i\) is
\(\max\{0.02,\,0.45P_i^{\rm ctrl}\}\) MW.
&
Nominal PV capacity at hub \(i\) is
\(\max\{0.02,\,0.48P_i^{\rm ctrl}\}\) MW.
\\

Battery capacity
&
Battery power capacity is \(\max\{0.025,\,0.32P_i^{\rm ctrl}\}\) MW. Energy capacity is twice the battery power capacity, and the initial state of charge is \(50\%\).
&
Battery power capacity is \(\max\{0.025,\,0.34P_i^{\rm ctrl}\}\) MW. Energy capacity is twice the battery power capacity, and the initial state of charge is \(50\%\).
\\

Dispatchable DER
&
Dispatchable capacity is \(\max\{0,\,0.18P_i^{\rm ctrl}\}\) MW, with a scarcity-shaped availability profile and hub-dependent marginal cost.
&
Dispatchable capacity is \(\max\{0,\,0.17P_i^{\rm ctrl}\}\) MW, with the same availability construction.
\\

Import/export limits
&
Import limit is
\(\max\{0.10,\,1.55P_i^{\rm ctrl}\omega^{\max}+0.08\}\) MW, and export limit is
\(\max\{0.04,\,0.50P_i^{\rm ctrl}+0.03\}\) MW.
&
Same import/export limit formulas.
\\

Profile-scaling notation
&
\(\omega^{\max}\) is the maximum value of the normalized 24-period demand profile used to scale hub demand.
&
Same definition of \(\omega^{\max}\).
\\

Congestion-study branch limits
&
Branch limits are:
\[
\bar f_\ell=\max\{F_\ell^{\rm floor},\,0.82\max_{t\in\Time}|f_{\ell t}^{\rm base}|\}.
\]
Source/transformer branches are protected by higher limits when needed to avoid artificial bottlenecks at the feeder head.
&
Same construction rule with the same stress factor \(0.82\).
\\

Branch-limit notation
&
\(f_{\ell t}^{\rm base}\) is the no-pricing base-case flow on monitored line \(\ell\) at time \(t\), and \(F_\ell^{\rm floor}\) is a line-specific minimum study limit used to avoid unrealistically small branch limits.
&
Same definitions of \(f_{\ell t}^{\rm base}\) and \(F_\ell^{\rm floor}\).
\\

Substation limit
&
\(\bar S\) is set to \(0.92\) times the peak no-pricing total import. The resulting value is \(3.889\) MW.
&
\(\bar S\) is set to \(0.92\) times the peak no-pricing total import. The resulting value is \(2.031\) MW.
\\

Dynamic-price policy
&
Common broadcast adder \(\delta_{it}=\delta_t\) for all hubs, with
\(-40\le\delta_t\le80\) EUR/MWh. The buy/sell treatment follows \eqref{eq:effective_prices}; the reported IEEE cases use a symmetric net-withdrawal adder, i.e., \(\chi^{\rm sell}=1\).
&
Same dynamic-price policy.
\\

Time horizon
&
\(T=24\) hourly settlement periods with \(\Delta t=1\) h.
&
\(T=24\) hourly settlement periods with \(\Delta t=1\) h.
\\

Benchmark methods
&
Base case, centralized full-information dispatch, complementarity-based KKT/MILP benchmark, and Quad-FY.
&
Base case, centralized full-information dispatch, complementarity-based KKT/MILP benchmark, and Quad-FY.
\\

Solver environment
&
Intel Core i9-13900K CPU, \(16.0\) GB RAM, 64-bit Windows 10.
&
Same solver environment.
\\

\bottomrule
\end{tabular}
\end{table*}

\begin{table}[!htb]
\centering
\caption{Interpretation of the Two Single-Level Routes}
\label{tab:route_scope}
\scriptsize
\setlength{\tabcolsep}{2.6pt}
\renewcommand{\arraystretch}{.95}
\begin{tabular}{L{.29\columnwidth}L{.31\columnwidth}L{.31\columnwidth}}
\toprule
Item & KKT/MILP benchmark & Proposed FY-SCA \\
\midrule
Added structure & Duals, complementarity, and possible binaries & Conjugate epigraph/oracle; no complementarity \\
Subproblem class & MILP/MPEC after linearization & Continuous convex program per SCA iteration \\
Reported guarantee & Incumbent unless the global solve closes & Fixed-price follower consistency plus stationarity of the penalized outer model \\
Main scaling driver & Number of follower constraints & Response/price dimension and conjugate evaluation \\
\bottomrule
\end{tabular}
\end{table}



\subsection{Synthetic Cases Network Model and Profiles}
\label{appsubsec:network_model}

Fig.~\ref{fig:network_tikz} shows the four-bus, three-hub feeder used in the computational study of synthetic cases. The topology has one DSO/slack bus, three heterogeneous hubs, and six monitored distribution lines. The hub connections are physical terminals rather than bilateral commercial trades; hence they are drawn without arrowheads, and the sign of the net withdrawal determines whether a hub is importing from or exporting to the feeder. Detailed definitions and specifications of the network variables are provided in the Nomenclature.

\begin{figure}[htb]
\centering
\resizebox{0.82\columnwidth}{!}{%
\begin{tikzpicture}[
    font=\scriptsize,
    bus/.style={circle, draw, minimum size=7mm, fill=gray!7},
    hub/.style={rectangle, rounded corners, draw, align=center, minimum width=14mm, minimum height=7mm, fill=gray!4},
    line/.style={thick},
    hubline/.style={thick, dashed}
]
\node[bus] (dso) at (0,0) {DSO};
\node[bus] (b1) at (-1.75,1.22) {1};
\node[bus] (b2) at (-1.75,-1.22) {2};
\node[bus] (b3) at (1.95,0) {3};
\node[hub] (h1) at (-3.30,1.22) {Hub 1};
\node[hub] (h2) at (-3.30,-1.22) {Hub 2};
\node[hub] (h3) at (3.50,0) {Hub 3};
\draw[line] (dso) -- node[above] {$\ell_1$} (b1);
\draw[line] (dso) -- node[below] {$\ell_2$} (b2);
\draw[line] (dso) -- node[above] {$\ell_3$} (b3);
\draw[line] (b1) -- node[left] {$\ell_4$} (b2);
\draw[line] (b1) -- node[above] {$\ell_5$} (b3);
\draw[line] (b2) -- node[below] {$\ell_6$} (b3);
\draw[hubline] (h1) -- node[above] {$w_{1t}$} (b1);
\draw[hubline] (h2) -- node[below] {$w_{2t}$} (b2);
\draw[hubline] (h3) -- node[above] {$w_{3t}$} (b3);
\end{tikzpicture}}
\caption{Three-hub active distribution network used in the study of synthetic cases. Hub-terminal lines are not directed arcs; positive $w_{it}$ denotes withdrawal and negative $w_{it}$ denotes export.}
\label{fig:network_tikz}
\end{figure}

\begin{table*}[htb]
\centering
\caption{Dynamic Test Cases and Common Computational Settings}
\label{tab:case_design}
\scriptsize
\setlength{\tabcolsep}{3.0pt}
\renewcommand{\arraystretch}{0.92}
\begin{tabular}{lL{0.60\textwidth}cccccc}
\toprule
Case & Stress narrative & $\bar S^{\pm}$ & Cap. scale & RE scale & Gen. scale & Arbitrage\\
 & & & (MW) & & & & \\
\midrule
A & Balanced reference feeder with moderate congestion and neutral prices. & 2.55 & 1.00 & 1.00 & 1.00 & No\\
B & Urban evening congestion with tighter line limits and higher background demand. & 2.40 & 0.78 & 0.88 & 1.08 & No\\
C & Renewable/storage arbitrage with larger PV and storage and scarcity spreads. & 2.45 & 0.95 & 1.45 & 0.90 & Yes\\
R1 & Synchronized EV-charging event with a short severe import pulse. & 3.15 & 1.45 & 0.92 & 1.02 & No\\
\midrule
\multicolumn{8}{p{0.96\textwidth}}{\emph{Common settings:} 3 hubs, 4 buses, 6 monitored lines, $T=12$, $\dd=1$ h, $\pm75\%$ flexible-load range by hub, and each hub has batteries, PV, and dispatchable generation.}\\
\multicolumn{8}{p{0.96\textwidth}}{\emph{Price policy and computation:} one shared $\lambda_t$ for all hubs, seed 42, SciPy/HiGHS LP/MILP, SciPy trust-constr QP.}\\
\multicolumn{8}{p{0.96\textwidth}}{\emph{Hardware:} Intel Core i9-13900K CPU, 16.0 GB RAM, 64-bit Windows 10.}\\
\bottomrule
\end{tabular}
\end{table*}


The four cases in Table~\ref{tab:case_design} are controlled synthetic stress tests generated with a fixed seed. They are not empirically calibrated to a named feeder; instead, they are designed to isolate representative operating regimes through transparent combinations of diurnal load peaks, time-varying price components, daylight-driven PV availability, storage-arbitrage opportunities, and rare synchronized demand pulses. Although synthetic, the profiles deliberately follow motifs commonly observed in public power-system data products, including hourly load and price series, transparency-platform data, U.S. balancing-authority load shapes, and PV production profiles consistent with public PVWatts-style models \cite{hirth2018entsoe,eiaGridMonitor,nrelPVWatts}.




\begin{figure}[htb]
\centering
\subfloat[\CongCap{Case A: balanced reference.}\label{fig:cong-a-v}]{%
  \IEEECongA[\columnwidth]}\\[-0.25ex]
\subfloat[\CongCap{Case B: urban congestion stress.}\label{fig:cong-b-v}]{%
  \IEEECongB[\columnwidth]}\\[-0.25ex]
\subfloat[\CongCap{Case C: renewable/storage arbitrage.}\label{fig:cong-c-v}]{%
  \IEEECongC[\columnwidth]}\\[-0.25ex]
\subfloat[\CongCap{Case R1: synchronized EV peak.}\label{fig:cong-r1-v}]{%
  \IEEECongRone[\columnwidth]}
\caption{Period-level congestion trajectories for the four scenarios.}
\label{fig:app_trajectories}
\end{figure}


For hub \(i\) and settlement period \(t\), a representative fixed-load profile is generated as
\begin{align}
D^{\rm fix}_{it}
=
\kappa_i D_i^{0}
\Bigg[
1
&+
a_i^{\rm m}
\exp\!\left(
-\frac{(t-t_i^{\rm m})^2}{2\sigma_{i,\rm m}^2}
\right)
\notag\\
&+
a_i^{\rm e}
\exp\!\left(
-\frac{(t-t_i^{\rm e})^2}{2\sigma_{i,\rm e}^2}
\right)
\Bigg]
+
\xi_{it}.
\label{eq:app_fixed_load_profile}
\end{align}
Here, \(D^{\rm fix}_{it}\) is the inflexible demand of hub \(i\) at period \(t\); \(D_i^{0}\) is the nominal fixed-demand level of hub \(i\); \(\kappa_i\) is the case-dependent demand multiplier; \(a_i^{\rm m}\) and \(a_i^{\rm e}\) are the relative amplitudes of the morning and evening demand components; \(t_i^{\rm m}\) and \(t_i^{\rm e}\) are their peak periods; \(\sigma_{i,\rm m}\) and \(\sigma_{i,\rm e}\) control the width of the two peaks; and \(\xi_{it}\) is a small deterministic perturbation used to avoid perfectly symmetric profiles. The morning and evening terms encode residential, campus, and logistics-driven consumption patterns.

The available PV production profile is represented by a daylight bell shape,
\begin{equation}
PV^{\rm avail}_{it}
=
\kappa_i^{\rm pv}
\bar P_i^{\rm pv}
\left[
\sin\!\left(
\pi\frac{t-t^{\rm sr}}{t^{\rm ss}-t^{\rm sr}}
\right)
\right]_+^{\gamma_i}.
\label{eq:app_pv_profile}
\end{equation}
where \(PV^{\rm avail}_{it}\) is the maximum available PV generation at hub \(i\) and period \(t\); \(\bar P_i^{\rm pv}\) is the nominal PV capacity of hub \(i\); \(\kappa_i^{\rm pv}\) is the case-dependent PV multiplier; \(t^{\rm sr}\) and \(t^{\rm ss}\) are sunrise-like and sunset-like period indices within the optimization horizon; \(\gamma_i\) controls the sharpness of the daylight production curve; and \([y]_+=\max\{y,0\}\) denotes the positive-part operator. This construction ensures zero PV availability outside the daylight window and a smooth midday production peak.

The market component of the retail price is constructed from off-peak, midday, and evening-scarcity components,
\begin{align}
\pi^{\rm mkt}_{t}
=
\pi^{0}
&+
\pi^{\rm mid}
\exp\!\left(
-\frac{(t-t^{\rm mid})^2}{2\sigma_{\rm mid}^2}
\right)
\notag\\
&+
\pi^{\rm eve}
\exp\!\left(
-\frac{(t-t^{\rm eve})^2}{2\sigma_{\rm eve}^2}
\right).
\label{eq:app_market_price_profile}
\end{align}
Here, \(\pi^{\rm mkt}_{t}\) is the exogenous market-price component at period \(t\); \(\pi^{0}\) is the base off-peak price level; \(\pi^{\rm mid}\) and \(\pi^{\rm eve}\) are the amplitudes of the midday and evening price components; \(t^{\rm mid}\) and \(t^{\rm eve}\) locate these price components in the horizon; and \(\sigma_{\rm mid}\) and \(\sigma_{\rm eve}\) determine their temporal widths. The dynamic congestion adder \(\delta\) is then optimized on top of this exogenous market component in the upper-level problem.

Rare-event stress is added through a horizon-relative pulse,
\begin{equation}
P^{\rm event}_{it}
=
A_i
\exp\!\left[
-\frac{(t-t^{\rm ev})^2}{2\sigma_{\rm ev}^2}
\right].
\label{eq:app_event_profile}
\end{equation}
where \(P^{\rm event}_{it}\) is the additional event-driven demand imposed on hub \(i\) at period \(t\); \(A_i\) is the hub-specific event amplitude; \(t^{\rm ev}\) is the event center; and \(\sigma_{\rm ev}\) controls the event duration. This term is used in Case R1 to represent synchronized EV charging.

The case multipliers are those reported in Table~I of the main text. Case A is the balanced reference case. Case B increases background demand and tightens network limits to create an urban congestion-stress condition. Case C increases PV and storage availability to test renewable-driven flexibility and arbitrage. Case R1 superimposes the rare-event pulse in \eqref{eq:app_event_profile} to test whether the pricing mechanism can handle a concentrated evening import shock.

The benchmarks are: no dynamic pricing (baseline), centralized linear dispatch (full-information lower bound), exact linear bilevel KKT/MILP, and the proposed quadratic FY coordination. The centralized case is not an implementable market mechanism; it is a lower bound on physically achievable congestion. The KKT/MILP row is exact only when the solver closes the relevant gap, or otherwise should be read with its reported status. The FY row solves convex fixed-price hub QPs and certifies follower consistency.


\subsection{Congestion Mitigation Results Across the Four Synthetic Profiles}
Table~\ref{tab:app_full_benchmarks} reports the main physical and computational outcomes. Our dynamic pricing reduces total congestion by $96.70\%$, $60.36\%$, $90.35\%$, and $39.62\%$ in Cases A, B, C, and R1, respectively. Fig.~\ref{fig:app_trajectories} reports the congestion trajectories generated by the computations for each case. The reduction is strongest in Cases A and C because congestion is largely caused by shiftable net withdrawals that can be retimed through the common price signal. In Case B, the feeder is much tighter and the centralized lower bound still has 8.849 MW of residual overload; hence the FY solution is only 0.340 MW above the centralized lower bound and slightly better than the KKT/MILP incumbent in total congestion. In R1, all non-baseline methods reach the same residual congestion as the centralized lower bound, indicating that the remaining overload is physical and topology/resource limited rather than a price-design failure.


\begin{table*}[htb]
\centering
\caption{Congestion and Runtime Outcomes Across Cases}
\label{tab:app_full_benchmarks}
\footnotesize
\setlength{\tabcolsep}{3.0pt}
\renewcommand{\arraystretch}{0.94}
\begin{tabular}{llrrrrrrL{0.25\textwidth}}
\toprule
Case & Method & Total & Line & Sub. & Red. & Time & Avg. adder & Status/certificate \\
 & & (MW) & (MW) & (MW) & (\%) & (s) & (EUR/MWh) & \\
\midrule
A & Baseline & 3.571 & 2.290 & 1.281 & -- & 0.092 & -- & LP/QP optimal \\
A & Central & 0.000 & 0.000 & 0.000 & 100.00 & 0.123 & 0.00 & lower bound \\
A & KKT & 0.042 & 0.042 & 0.000 & 98.81 & 189.1 & 33.38 & auto runtime limit reached \\
A & FY & 0.118 & 0.118 & 0.000 & 96.70 & 69.99 & 32.63 & FY residual $1.14\times10^{-13}$ \\
\midrule
B & Baseline & 23.180 & 16.780 & 6.402 & -- & 0.093 & -- & LP/QP optimal \\
B & Central & 8.849 & 8.849 & 0.000 & 61.83 & 0.130 & 0.00 & lower bound \\
B & KKT & 9.299 & 8.944 & 0.356 & 59.88 & 300.2 & 40.59 & exact KKT/MILP incumbent \\
B & FY & 9.189 & 8.947 & 0.241 & 60.36 & 90.92 & 41.78 & FY residual $2.27\times10^{-13}$ \\
\midrule
C & Baseline & 7.911 & 4.528 & 3.382 & -- & 0.094 & -- & LP/QP optimal \\
C & Central & 0.189 & 0.189 & 0.000 & 97.62 & 0.125 & 0.00 & lower bound \\
C & KKT & 0.592 & 0.592 & 0.000 & 92.52 & 321.1 & 35.18 & price-change tolerance reached \\
C & FY & 0.764 & 0.735 & 0.029 & 90.35 & 67.98 & 35.21 & FY residual $1.71\times10^{-13}$ \\
\midrule
R1 & Baseline & 1.433 & 0.971 & 0.462 & -- & 0.093 & -- & LP/QP optimal \\
R1 & Central & 0.865 & 0.865 & 0.000 & 39.62 & 0.123 & 0.00 & lower bound \\
R1 & KKT & 0.865 & 0.865 & 0.000 & 39.62 & 311.6 & 11.34 & price-change tolerance reached \\
R1 & FY & 0.865 & 0.865 & 0.000 & 39.62 & 100.4 & 20.12 & FY residual $1.14\times10^{-13}$ \\
\bottomrule
\end{tabular}
\end{table*}

The mechanism behind the reduction is visible from the import and flexibility metrics, and solver diagnostics in Tables~\ref{tab:app_response_metrics}--\ref{tab:app_solver_diagnostics}. The price signal does not create or destroy demand: fixed load is served and flexible-load energy is conserved. Instead, the DSO price increases the marginal cost of importing during overloaded periods, causing hubs to shift flexible consumption, modify battery schedules, use dispatchable generation when economical, and reduce feeder-head import. The total import falls by 15.0\%, 22.3\%, 12.5\%, and 15.0\% from the baseline in Cases A, B, C, and R1. Case B therefore improves mainly through import reduction and modest flexible shifting under tight line limits. Case C uses the largest flexible shift, consistent with its renewable/storage-arbitrage construction. In R1 the baseline already activates substantial flexibility, so the additional benefit comes from synchronizing the event-period price with the physical congestion pulse; once the centralized residual is reached, higher price effort cannot remove the remaining overload.

\begin{table*}[htb]
\centering
\caption{Detailed Price-Response Metrics for the Implementable Methods}
\label{tab:app_response_metrics}
\footnotesize
\setlength{\tabcolsep}{3.0pt}
\renewcommand{\arraystretch}{0.94}
\begin{tabular}{llrrrrrrr}
\toprule
Case & Method & Avg. adder & Ramp & Flex shift & Battery throughput & Renewable util. & Import\\
 & & (EUR/MWh) & (EUR/MWh) &  (MWh) & (MWh) & (\%) & (MWh)\\
\midrule
A & Baseline & -- & --  & 0.164 & 1.340 & 100 & 30.45\\
A & KKT & 33.38 & 19.39 & 1.985 & 0.521 & 100 & 26.15\\
A & FY & 32.63 & 13.49  & 2.001 & 0.521 & 100 & 25.89\\
\midrule
B & Baseline & -- & --  & 0.000 & 1.495 & 100 & 35.17\\
B & KKT & 40.59 & 8.286  & 0.000 & 0.455 & 100 & 27.21\\
B & FY & 41.78 & 9.609  & 0.998 & 0.455 & 100 & 27.33\\
\midrule
C & Baseline & -- & --  & 0.294 & 2.359 & 100 & 27.42\\
C & KKT & 35.18 & 23.81  & 2.865 & 1.245 & 100 & 23.98\\
C & FY & 35.21 & 17.75  & 2.865 & 1.244 & 100 & 23.98\\
\midrule
R1 & Baseline & -- & -- & 2.446 & 2.047 & 100 & 34.88\\
R1 & KKT & 11.34 & 30.00 & 2.257 & 1.909 & 100 & 32.05\\
R1 & FY & 20.12 & 0.391 & 2.446 & 2.085 & 100 & 29.65\\
\bottomrule
\end{tabular}
\end{table*}

\begin{table*}[htb]
\centering
\caption{Solver and Certificate Diagnostics}
\label{tab:app_solver_diagnostics}
\footnotesize
\setlength{\tabcolsep}{3.0pt}
\renewcommand{\arraystretch}{0.94}
\begin{tabular}{lrrrrL{0.55\textwidth}}
\toprule
Case & FY iter. & FY time & FY residual  & KKT time & Interpretation\\
 & & (s) &   & (s) & \\
\midrule
A & 11 & 69.99 & $1.14\times10^{-13}$ &  189.1 & KKT is $2.70\times$ slower and reaches a runtime status; FY certifies the same lower-level response.\\
B & 13 & 90.92 & $2.27\times10^{-13}$ &  300.2 & The tight network drives residual physical congestion; FY improves the KKT incumbent in total congestion.\\
C & 9 & 67.98 & $1.71\times10^{-13}$ &  321.1 & Storage and PV create the largest flex response; KKT has a larger ramp and longer runtime.\\
R1 & 3 & 100.4 & $1.14\times10^{-13}$ &  311.6 & The event is concentrated; FY reaches the centralized residual with a smoother price profile.\\
\bottomrule
\end{tabular}
\end{table*}

\subsection{Computational and Economic Interpretation of the Four Synthetic Cases Results}

Three observations are most important. First, FY should be assessed against both no-pricing and centralized lower-bound dispatch. Case B appears difficult because its baseline congestion is 23.18 MW, but the centralized lower bound is already 8.849 MW; hence the FY gap to the physical lower bound is only 0.340 MW, and FY slightly improves the reported KKT/MILP incumbent in total congestion. Second, congestion reduction is caused by the price-induced redistribution of net withdrawals, not by price magnitude alone. Case R1 reaches the centralized residual with a smaller average adder than Cases A--C because the stress is event-like; Case B needs the largest adder because scarcity is persistent, yet physical feeder limits dominate the remaining overload. Third, the computational burden shifts away from complementarity. KKT/MILP introduces dual and complementarity variables for lower-level inequalities and often stops at runtime or tolerance limits; FY solves continuous fixed-price hub QPs, reports residuals below $2.3\times10^{-13}$, and is $3.4\times$ faster on average.

Additionally, in terms of the structural scalability comparison between the KKT-based approach and the proposed FY, let \(n_x\) denote the dimension of the variable follower, \(m_i\) the number of follower inequalities, \(m_e\) the number of equalities, and \(p\) the dimension of the leader-price.  A KKT/MPEC reformulation introduces \(m_i\) nonnegative dual variables, \(m_e\) equality dual variables, stationarity equations, and \(m_i\) complementarity products.  If complementarity is enforced by a mixed-integer linearization, the number of binary or disjunctive constructs scales with \(m_i\).  The FY representation uses the primal response and price variables together with an optimality-loss evaluation; it does not introduce one complementarity product per lower-level inequality.  Table~\ref{tab:app_complexity} summarizes the resulting structural difference.

\begin{table}[htb]
\centering
\caption{Structural Growth of the Two Single-Level Routes}
\label{tab:app_complexity}
\scriptsize
\setlength{\tabcolsep}{3.0pt}
\renewcommand{\arraystretch}{0.94}
\begin{tabular}{lcc}
\toprule
Quantity & KKT/MPEC route & FY-SCA route\\
\midrule
Primal response & \(n_x\) & \(n_x\)\\
Leader price & \(p\) & \(p\)\\
Inequality duals & \(m_i\) & --\\
Equality duals & \(m_e\) & --\\
Complementarity products & \(m_i\) & --\\
Binary/disjunctive terms & up to \(m_i\) & --\\
Convex subproblem type & LP/QP/MILP/MPEC & continuous convex program\\
Main bottleneck & complementarity scaling & conjugate/oracle evaluation\\
\bottomrule
\end{tabular}
\end{table}

The observed runtimes in Table~\ref{tab:app_solver_diagnostics} are consistent with this structural comparison: KKT/MILP is between \(2.70\times\) and \(4.72\times\) slower than FY across the four reported cases, and it terminates with runtime or tolerance statuses in three of the four cases.  This does not mean that FY is globally exact for arbitrary bilevel programs; rather, it shows that for the affine price-entry convex followers studied here, the FY route supplies a compact residual-certified alternative whose computational burden is less sensitive to the number of lower-level inequalities.

\bibliographystyle{IEEEtran}
\bibliography{references}
\end{document}